\newcommand{\GG}{\mathcal{G}}
\newcommand{\HH}{\mathcal{H}}
\newcommand{\DD}{\mathcal{D}}
\newcommand{\spans}{\operatorname{span}}
\renewcommand{\ne}{\operatorname{ne}}
\newcommand{\child}{\mathfrak{c}}
\newcommand{\parent}{\mathfrak{p}}
\DeclareMathOperator{\pa}{pa}
\DeclareMathOperator{\ch}{ch}
\DeclareMathOperator{\de}{de}
\DeclareMathOperator{\nd}{nd}
\DeclareMathOperator{\cl}{cl}
\DeclareMathOperator{\BIC}{BIC}
\DeclareMathOperator{\diam}{diam}
\newcommand\independent{\protect\mathpalette{\protect\independenT}{\perp}}
\def\independenT#1#2{\mathrel{\rlap{$#1#2$}\mkern2mu{#1#2}}}
\def\newop#1{\expandafter\def\csname #1\endcsname{\mathop{\rm
#1}\nolimits}}
\newtheorem{theorem}{Theorem}[section]
\newtheorem{proposition}[theorem]{Proposition}
\newtheorem{corollary}[theorem]{Corollary}
\newtheorem{lemma}[theorem]{Lemma}
\theoremstyle{definition}
\newtheorem{definition}[theorem]{Definition}
\newtheorem{example}[theorem]{Example} 
\theoremstyle{remark}
\newtheorem{conjecture}{Conjecture}
\title{Diameters of the Characteristic
Imset Polytopes}
\author{Petter Restadh}
\email{petterre@kth.se}
\address{Department of Mathematics\\
    KTH Royal Institute of Technology\\
    SE-100 44 Stockholm, Sweden}
\keywords{Characteristic Imset Polytope, Edge-walk, Graphical Models, Polytope Diameter}
\subjclass{52B05, 52B12, 62H22}
\begin{document}

\begin{abstract}
It has been shown that the edge structure of the characteristic imset polytope is closely connected to the question of causal discovery. 
The diameter of a polytope is an indicator of how connected the polytope is and moreover gives us a hypothetical worst case scenario for an edge-walk over the polytope. 
We present low-degree polynomial bounds on the diameter of $\CIM_n$ and, for any given undirected graph $G$, the face $\CIM_G$. 
\end{abstract}
\maketitle

\section{Introduction}
\label{sec: intro}
Several algorithms within causal discovery were recently discovered to be edge-walks along convex polytopes. 
A natural question becomes how efficient such an edge-walk can become. 
To this end we study the diameters of these polytopes.

Let $[n]\coloneqq \{1, \dots,n\}$ and $\GG=([n], E)$ be a directed acyclic graph (DAG). 
The characteristic imset of $\GG$, $c_\GG$, is a 0/1-vector, indexed by subsets of $[n]$, that in coordinate  $S\in \{S\subseteq [n], |S|\geq 2\}$ assumes the value  
\[
c_\GG(S) \coloneqq
\begin{cases}
1	&	\text{ if there exists $i\in S$ such that, $S\subseteq \pa_\GG(i)\cup \{i\}$},	\\
0	&	\text{ otherwise}.	\\
\end{cases}
\]
Then we define the \emph{characteristic imset polytope} as
\[
\CIM_n\coloneqq \conv\left(c_\GG\colon \GG=([n], E) \textrm{ a DAG}\right).
\]
The polytope $\CIM_n$ is a full dimensional ($\dim\CIM_n=\left|\{S\subseteq [n], |S|\geq 2\}\right|=2^n-n-1$) polytope whose vertices are precisely the characteristic imsets of DAGs. 
The mapping $\GG\mapsto c_\GG$ is not injective; we do however have a clear graphical understanding of when $c_\GG=c_\HH$ (see \cref{lem: studeny}).
It has been shown that $\CIM_n$ has many facets, at least one for each connected matroid on $[n]$ \cite{S15}, but a complete facet description is only available for $n\leq 4$. 
Especially we are interested in explaining the geometry, such as the edges or facets, of $\CIM_n$ in terms of the DAGs. 

The motivation for these questions comes from the area of causal discovery where a well-studied question regards finding  algorithms for inferring a DAG from data \cite{C02, SG91, WSYU17}. 
To do this we interpret $i\to j$ in $\GG$ to mean $i$ being a direct cause of $j$. 
Studen\'y, Hemmecke, and Lindner transformed this question into a linear program over $\CIM_n$ \cite{S05, SHL10}. 
The authors of \cite{LRS20} showed that the edge structure of $\CIM_n$ is of particular interest. 
In their paper we are given a geometric interpretation of several greedy algorithms as edge-walks over $\CIM_n$ and its faces. 
In particular, for any undirected graph $G$, the face \cite{LRS20}
\[
\CIM_G\coloneqq \conv\left(c_\GG\colon \GG=([n], E) \textrm{ a DAG with skeleton }G\right),
\]
(of $\CIM_n$) is studied.
A complete characterisation of the edges of $\CIM_G$ when $G$ is a tree or a cycle was recently discovered \cite{LRS22}. 
For general $G$, less is known and the only edges with a clear interpretation are the ones given in \cite[Proposition 3.2]{LRS20}, namely that changing the direction of a single edge in a DAG, that does not create a directed cycle, gives us an edge in $\CIM_G$ (see \cref{thm: turn pair}). 

Given any polytope $P$, the vertex-edge graph of $P$, $G(P)$, is the graph with nodes corresponding to the vertices of $P$ and an edge $v-u\in G(P)$ if and only if $\conv(v,u)$ is an edge of $P$. 
We define the distance in $G(P)$ between $v$ and $u$ as the length of the shortest path between $v$ and $u$ in $G(P)$, and the diameter of $P$, $\diam(P)$, as the maximal distance between any two vertices in $G(P)$. 
Polytope diameters have been studied extensively and show up in several different contexts \cite{Naddef89, Santos12, Zie95}. 
The original motivation was that the diameter of a polytope is a lower bound on the number of steps a simplex-type algorithm must take. 
It also provides an indication of whether the graph $G(P)$ is sparse or more densely connected. 
Therefore, to better understand the polyhedral aspects of causal discovery knowledge of the diameter of $\CIM_n$ and it's faces is desired. 

In this paper we will establish low-degree polynomial bounds on the diameters of the above-mentioned polytopes. 
In particular, in \cref{sec: diameter} we see that for a general undirected graph $G=([n], E)$ we have $\diam\CIM_G\leq |E|$.
If $G$ is a tree we can, via the work of \cite{LRS22}, improve this bound to $\diam\CIM_G\leq n-2$ and give a lower bound in terms of the maximal path length of $G$. 
This is done in \cref{subsec: trees}. 
Finally in \cref{subsec: cim p} we show, using a new type of edge, that $\diam\CIM_n\leq 2n-2$. 
Given this linear upper bound, we conjecture that we in fact have linear upper bounds (in $n$) on $\diam \CIM_G$ for any $G$. 

\subsection{Background}
Let $\GG$ be a DAG. 
If we have $i\to j$ in $\GG$ we say that $i$ is a \emph{parent} of $j$, or that $j$ is a \emph{child} if $i$. 
The set of all parents and children of a node $i$ is denoted with $\pa_\GG(i)$ and $\ch_\GG(i)$, respectively. 
For any joint distribution $P$ over $X_1, \dots, X_n$ we say that $P$ is \emph{Markov} to $\GG$ if $P$ entails the conditional independence statements
\[
X_i\independent X_{\nd_\GG(i)\setminus \pa_\GG(i)}|X_{\pa_\GG(i)}
\]
for all $i\in[n]$ where $X_S$ denotes the set $\{X_i\}_{i\in S}$ for any $S\subseteq [n]$.
Here $\nd_\GG(i)$ denotes the non-descendants of $i$ in $\GG$, that is all vertices $j$ such that there does not exist a directed path $j\to\dots\to i$. 
Informally this should be interpreted as "the only direct causes of $i$ are the parents of $i$".
It can happen that two DAGs encode equivalent conditional independence statements (see \cref{ex: markov equivalence}). 
If this is the case we call them \emph{Markov equivalent} or that they belong in the same \emph{Markov equivalence class (MEC)}.

\begin{example}
\label{ex: markov equivalence}
In \cref{fig: ex markov equivalence} we have 3 examples of DAGs and the conditional independence statements encoded by them.
Note that two of them encode exactly the same conditional independence statements.
\begin{figure}
\[
\begin{tikzpicture}[scale =0.6]
\node (m1) at (0,2) {$X_1$};
\node (d1) at (2,2) {$X_3$};
\node (c1) at (1,0) {$X_2$};
\draw[->] (m1) -- (c1);
\draw[->] (d1) -- (c1);
\node at (1, -2) {$\{ X_1 \independent X_3| \emptyset\}$};

\node (g2) at (6,3) {$X_3$};
\node (p2) at (6,1.5) {$X_2$};
\node (c2) at (6,0) {$X_1$};
\draw[->] (g2) -- (p2);
\draw[->] (p2) -- (c2);
\node at (6, -2) {$\{ X_1 \independent X_3| X_2\}$};

\node (p3) at (11,2) {$X_2$};
\node (b3) at (10, 0) {$X_1$};
\node (s3) at (12,0) {$X_3$};
\draw[->] (p3) -- (b3);
\draw[->] (p3) -- (s3);
\node at (11, -2) {$\{ X_1 \independent X_3|X_2\}$};

\end{tikzpicture}
\]
\caption{An example of 3 DAGs and the conditional independence statements encoded by them.}
\label{fig: ex markov equivalence}
\end{figure}
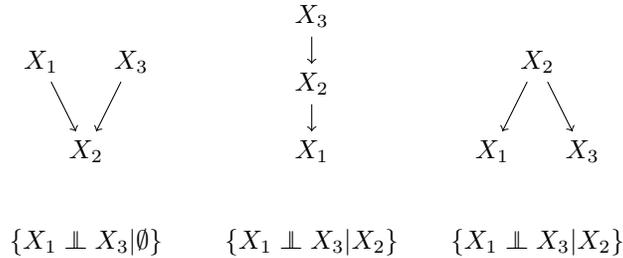
\end{example}

An induced subgraph of $\GG$ such that $\GG|_{\{i,j,k\}}=i\to j\leftarrow k$ is called a \emph{v-structure}. 
The undirected graph $G$ that shares the same vertices and adjacencies as $\GG$ is known as the \emph{skeleton} of $\GG$. 
We also let $\ne_\GG(i)=\pa_\GG(i)\cup \ch_\GG(i)$, or equivalently $i$ is a neighbour of $j$ in a directed graph if they are neighbours in the skeleton,.
The \emph{closure} of $i$, denoted $\cl_\GG(i)\coloneqq \ne_\GG(i)\cup\{i\}$ is the set consisting of all neighbours of $i$ together with $i$. 
The following classical result by Verma and Pearl gives us a graphical interpretation of Markov equivalence.
\begin{theorem}
\label{thm: verma pearl}
\cite{VP92}
Two DAGs are Markov equivalent if and only if they have the same skeleton and the same v-structures.
\end{theorem}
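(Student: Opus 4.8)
The plan is to pass to the combinatorial language of $d$-separation. For any DAG $\GG$, the conditional independences entailed by every distribution that is Markov to $\GG$ are exactly the $d$-separation statements of $\GG$: soundness of the $d$-separation criterion gives one inclusion, and the well-known fact that every DAG admits a faithful distribution --- one whose conditional independences are precisely its $d$-separations --- gives the other. So two DAGs are Markov equivalent if and only if they induce the same $d$-separation relation, and it suffices to prove the theorem with ``Markov equivalent'' replaced by ``induce the same $d$-separations''.

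\textbf{Forward direction.} Suppose $\GG_1$ and $\GG_2$ induce the same $d$-separations. First I would match the skeletons: if $i$ and $j$ were adjacent in $\GG_1$ but not in $\GG_2$, fix a topological order of $\GG_2$ with $i$ before $j$, so that $i \in \nd_{\GG_2}(j) \setminus \pa_{\GG_2}(j)$; by the Markov property $i$ and $j$ are $d$-separated by $\pa_{\GG_2}(j)$ in $\GG_2$, whereas in $\GG_1$ the single edge between $i$ and $j$ is an active path relative to every set avoiding $\{i,j\}$, so no $d$-separation of $i$ from $j$ holds in $\GG_1$ --- a contradiction. Write $G$ for the common skeleton. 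Now let $i \to k \leftarrow j$ be a v-structure of $\GG_1$, so $i$ and $j$ are non-adjacent in $G$; fixing a topological order of $\GG_1$ with $i$ before $j$ and using that the common child $k$ comes after $j$, the Markov property gives that $i$ and $j$ are $d$-separated by $\pa_{\GG_1}(j)$ in $\GG_1$, with $k \notin \pa_{\GG_1}(j)$. The length-two path $i - k - j$ lies in $G$; if $k$ were \emph{not} a collider on it in $\GG_2$ then, being a non-collider outside the conditioning set, it would fail to block the path, so $i$ and $j$ would be $d$-connected given $\pa_{\GG_1}(j)$ in $\GG_2$ --- contradicting equality of the $d$-separation relations. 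Hence $i \to k \leftarrow j$ is a v-structure of $\GG_2$ too, and by symmetry the v-structures agree.

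\textbf{Backward direction} (the part I expect to carry the weight). Assume now that $\GG_1$ and $\GG_2$ share the skeleton $G$ and the v-structures, and suppose for contradiction that some triple $(i,j,S)$ is $d$-connected in $\GG_1$ but not in $\GG_2$; by symmetry it suffices to exclude this. Take a \emph{shortest} path $\pi = v_0 v_1 \cdots v_m$ in $G$ from $i$ to $j$ that is active relative to $S$ in $\GG_1$. The first observation is that on a shortest active path every collider has non-adjacent path-neighbours: if $v_{t-1} \to v_t \leftarrow v_{t+1}$ in $\GG_1$ with $v_{t-1}$ and $v_{t+1}$ adjacent in $G$, then deleting $v_t$ and inserting the edge $v_{t-1} - v_{t+1}$ gives a strictly shorter path which one checks is still active --- the only vertices whose collider status can change are $v_{t-1}$ and $v_{t+1}$, and whichever of them becomes a collider is forced to be an ancestor of $v_t$, hence an ancestor of $S$. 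Consequently every $\GG_1$-collider on $\pi$ is the centre of a genuine v-structure, which is therefore oriented the same way, and is again a collider, in $\GG_2$; similarly a $\GG_1$-non-collider on $\pi$ with non-adjacent path-neighbours stays a non-collider in $\GG_2$.

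The remaining --- and, I expect, harder --- point is to match the \emph{activation} conditions rather than just the collider pattern: a $\GG_1$-collider $v_t$ on $\pi$ has a descendant in $S$ in $\GG_1$ and one must exhibit a descendant in $S$ in $\GG_2$, while a $\GG_1$-non-collider on $\pi$ whose path-neighbours happen to be adjacent may turn into a collider in $\GG_2$ and then also needs a descendant in $S$ there. I would treat both uniformly by passing to the equivalent description via active \emph{walks}, in which a collider must lie \emph{in} $S$ and not merely above it: a shortest directed witness path $v_t \to w_1 \to \cdots \to w_r \in S$ in $\GG_1$ is spliced into the walk, and the same skeleton/v-structure comparison is applied along it. Its interior vertices are non-colliders, so reversing any of its edges in $\GG_2$ can neither create nor destroy a v-structure or an adjacency, and this pins down enough of the surrounding orientations in $\GG_2$ to rebuild a route from $v_t$ into $S$. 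Interleaving these local reroutings with the comparison of $\pi$ yields an active walk in $\GG_2$, the desired contradiction; symmetry then gives equality of the two $d$-separation relations, and hence of the Markov equivalence classes. The bulk of the work --- and the real obstacle --- is the bookkeeping that keeps these surgeries consistent: staying inside the common skeleton throughout, and never invoking an orientation not already forced by the shared v-structures.
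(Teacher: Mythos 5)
The paper does not prove this statement: it is quoted from Verma--Pearl \cite{VP92} as a known result, so there is no in-paper argument to compare against. Judging your proposal on its own terms: the reduction to $d$-separation via soundness plus faithfulness is standard and fine, and your forward direction (recovering the skeleton from the local Markov property of $d$-separation, then recovering each v-structure $i \to k \leftarrow j$ by testing the path $i - k - j$ against the separating set $\pa_{\GG_1}(j)$, which excludes $k$) is correct.

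The backward direction, however, has a genuine gap exactly where you flag it, and it is not mere bookkeeping. Your shortest-path argument correctly shows that every collider on a shortest $\GG_1$-active path is a v-structure and hence a collider in $\GG_2$, and that non-colliders with non-adjacent neighbours stay non-colliders. But two things remain unproved. First, a collider $v_t$ on the path is activated in $\GG_1$ by having a descendant in $S$, and you must produce a descendant of $v_t$ in $S$ \emph{in $\GG_2$}; the descendant relation is not preserved under Markov equivalence (already $a \to b$ versus $a \leftarrow b$ shows ancestral sets change), so the witness path cannot simply be ``re-read'' in $\GG_2$. Your claim that splicing in a shortest directed witness path and comparing v-structures ``pins down enough of the surrounding orientations to rebuild a route from $v_t$ into $S$'' is an assertion, not an argument: the constraint that consecutive triples $w_{l-1} \to w_l \to w_{l+1}$ are not v-structures only forbids $\GG_2$ from orienting both edges into $w_l$, which does not by itself yield a directed path into $S$ in $\GG_2$. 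Second, a non-collider whose path-neighbours are adjacent may become a collider in $\GG_2$ and then also needs such a witness; you name this case but do not dispose of it. This transfer of activation witnesses is the substantive content of the theorem, and the known proofs handle it either by Verma and Pearl's careful induction on path/witness length, by the moralization criterion, or by Chickering's covered-edge-reversal characterization (each single covered reversal visibly preserves $d$-separation). As written, your argument establishes only that the collider/non-collider \emph{pattern} of the path transfers, not that the path remains \emph{active}, so the proof is incomplete.
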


The previously mentioned work by Studen\'y \cite{S05} outlines how to encode a model characterized via conditional independence statements, for example DAG models, via integer vectors. 
One key idea being that a maximum likelihood estimation over all models is equivalent to maximizing a linear function over the set of vectors. 
Developing this idea Studen\'y, Hemmecke, and Lindner introduced the characteristic imset; a vector encoding where the MEC of $\GG$ could easily be recovered from $c_\GG$. 
Indeed, it is direct from the definition of the characteristic imset that the following holds:
\begin{lemma}
\label{lem: imset structure}
\cite{SHL10}
Let $\GG$ be a DAG with nodes $[n]$.
Then for any distinct nodes $i$, $j$, and $k$ we have
\begin{enumerate}[label=(\arabic*)]
\item{$i\leftarrow j$ or $i\rightarrow j$ in $\GG$ if and only if $c_\GG(\{i, j\})=1$.}
\item{$i\rightarrow j \leftarrow k$ is a v-structure in $\GG$ if and only of $c_\GG(\{i,j,k\})=1$ and $c_\GG(\{i,k\})=0$.}
\end{enumerate}
\end{lemma}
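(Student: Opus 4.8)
The plan is to unwind the definition of the characteristic imset on the two relevant families of small sets and translate the membership condition ``$S\subseteq \pa_\GG(\ell)\cup\{\ell\}$'' into statements about edges. The single observation driving everything is that, for a set $S$ and a node $\ell\in S$, the condition $S\subseteq \pa_\GG(\ell)\cup\{\ell\}$ says precisely that every element of $S$ other than $\ell$ is a parent of $\ell$ in $\GG$; the defining disjunction for $c_\GG(S)=1$ then ranges over the possible choices of such a witness $\ell\in S$.

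For part~(1) I would take $S=\{i,j\}$ and enumerate the two candidate witnesses. Choosing $\ell=i$ amounts to $j\in\pa_\GG(i)$, i.e.\ $j\to i$; choosing $\ell=j$ amounts to $i\in\pa_\GG(j)$, i.e.\ $i\to j$. Hence $c_\GG(\{i,j\})=1$ if and only if one of these two edges is present, i.e.\ if and only if $i$ and $j$ are adjacent in $\GG$, which is exactly the claim.

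For part~(2) I would take $S=\{i,j,k\}$. In the forward direction, if $i\to j\leftarrow k$ is a v-structure then $i,k\in\pa_\GG(j)$, so $\ell=j$ witnesses $\{i,j,k\}\subseteq\pa_\GG(j)\cup\{j\}$ and $c_\GG(\{i,j,k\})=1$; moreover $i$ and $k$ are non-adjacent, so $c_\GG(\{i,k\})=0$ by part~(1). For the converse, assume $c_\GG(\{i,j,k\})=1$ and $c_\GG(\{i,k\})=0$. The first equality supplies a witness $\ell\in\{i,j,k\}$; I would then rule out $\ell=i$ and $\ell=k$, since either would force $i$ and $k$ adjacent (for instance $\ell=i$ gives $k\in\pa_\GG(i)$, hence the edge $k\to i$), contradicting $c_\GG(\{i,k\})=0$ via part~(1). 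Thus necessarily $\ell=j$, so $i,k\in\pa_\GG(j)$, i.e.\ $i\to j$ and $k\to j$; together with $i,k$ non-adjacent this is precisely the v-structure $i\to j\leftarrow k$.

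I do not expect a genuine obstacle: acyclicity of $\GG$ is never used, and the whole argument is a matter of reading off the defining disjunction on sets of size two and three. The only place that needs a little care is the case analysis in the converse of~(2), where part~(1) must be invoked to discard the two ``wrong'' witnesses $\ell=i$ and $\ell=k$ before concluding that $j$ is the common child.
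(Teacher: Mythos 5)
Your proposal is correct and is exactly the argument the paper has in mind: the paper gives no written-out proof, merely noting that the lemma ``is direct from the definition of the characteristic imset'' and citing \cite{SHL10}, and your careful unwinding of the witness condition $S\subseteq\pa_\GG(\ell)\cup\{\ell\}$ on sets of size two and three is precisely that direct verification. No discrepancy to report.
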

That is, the characteristic imset encodes the skeleton and the v-structures of a DAG. 
Therefore it is easy to recover the MEC of $\GG$ from $c_\GG$. 
Moreover, Studen\'y, Hemmecke, and Lindner showed that the characteristic imset is in fact a unique representation of the MEC.
\begin{theorem}
\label{lem: studeny}
\cite{SHL10}
Two DAGs $\GG$ and $\HH$ are Markov equivalent if and only if $c_\GG = c_\HH$.
\end{theorem}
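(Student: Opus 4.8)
The plan is to treat the two implications separately and reduce everything to Verma--Pearl (\cref{thm: verma pearl}) together with \cref{lem: imset structure}. The implication ``$c_\GG = c_\HH \Rightarrow \GG,\HH$ Markov equivalent'' is immediate and uses only the $2$- and $3$-element coordinates: by \cref{lem: imset structure}(1), equality on pairs forces the same skeleton; by \cref{lem: imset structure}(2), equality on the relevant triples forces the same v-structures; and \cref{thm: verma pearl} concludes. All the work is in the converse, so assume from now on that $\GG$ and $\HH$ have the same skeleton $G$ and the same v-structures, and fix $S \subseteq [n]$ with $|S| \geq 2$; we must show $c_\GG(S) = c_\HH(S)$, which is not covered by \cref{lem: imset structure} once $|S| \geq 4$.

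The first step is a reformulation of the coordinate in graph-theoretic terms. Since $S \subseteq \pa_\GG(i) \cup \{i\}$ is the same as $S \setminus \{i\} \subseteq \pa_\GG(i)$, unwinding the definition shows that $c_\GG(S) = 1$ if and only if the induced subgraph $\GG|_S$ has a vertex $i$ that is adjacent to every other vertex of $S$ and into which every other vertex of $S$ points; call such an $i$ a \emph{king} of $S$ in $\GG$. Hence it suffices to prove that the predicate ``$S$ has a king'' depends only on $G$ and the v-structures --- equivalently, that $S$ has a king in $\GG$ if and only if it has one in $\HH$.

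For the core step, suppose $i$ is a king of $S$ in $\GG$; I will produce a king of $S$ in $\HH$. Let $Q \subseteq S$ be the set of vertices adjacent in $G$ to every other vertex of $S$. As $i$ is a king, $i \in Q$, so $Q \neq \emptyset$; since $Q$ is a clique of $G$, $\HH|_Q$ is an acyclic orientation of a complete graph, i.e.\ a transitive tournament, with a unique sink $m$. I claim $m$ is a king of $S$ in $\HH$. Membership in $Q$ gives that $m$ is adjacent to all of $S \setminus \{m\}$, so it remains to check that every $v \in S \setminus \{m\}$ has $v \to m$ in $\HH$. If $v \in Q$ this is exactly the statement that $m$ is the sink of $\HH|_Q$. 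If $v \notin Q$, choose a non-neighbour $w \in S$ of $v$; because $i$ is a king of $S$, both $v$ and $w$ are parents of $i$ in $\GG$ while being non-adjacent, so $v \to i \leftarrow w$ is a v-structure of $\GG$, hence of $\HH$, so $v \to i$ in $\HH$. If the edge between $v$ and $m$ were oriented $m \to v$ in $\HH$, then chaining it with $v \to i$ and a directed path from $i$ to $m$ inside the transitive tournament $\HH|_Q$ (the trivial path if $i = m$, otherwise the edge $i \to m$) would yield a directed cycle in $\HH$, a contradiction; hence $v \to m$ in $\HH$. Thus $m$ is a king of $S$ in $\HH$; by the symmetry between $\GG$ and $\HH$, $S$ has a king in one iff it has a king in the other, so $c_\GG(S) = c_\HH(S)$.

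The only genuinely nontrivial point is the last step: a particular king $i$ of $S$ need not remain a king in $\HH$, since an edge at $i$ that lies in no v-structure within $S$ may be reoriented; the remedy is to pass to the sink $m$ of the clique core $Q$ and invoke acyclicity to force every vertex of $S$ to point into $m$. Everything else --- the king reformulation and, if one wants, a direct check of $|S| \in \{2,3\}$ (which simply recovers \cref{lem: imset structure}) --- is routine and is in any case a special case of the argument above.
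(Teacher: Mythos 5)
Your argument is correct, but note that the paper does not actually prove this statement: it is quoted as a known result with a citation to Studen\'y, Hemmecke, and Lindner, so there is no in-paper proof to compare against. What you have written is a complete, self-contained derivation from \cref{thm: verma pearl} and \cref{lem: imset structure}. The easy direction (equality of imsets forces equal skeletons and v-structures, hence Markov equivalence) is exactly as one would expect. The converse is where the content lies, and your handling of it is sound: the ``king'' reformulation of $c_\GG(S)=1$ is a faithful unwinding of the definition; the set $Q$ of vertices of $S$ adjacent to all of $S$ is indeed a nonempty clique containing the king $i$; the unique sink $m$ of the acyclic tournament $\HH|_Q$ exists; and the two cases ($v\in Q$ handled by sinkhood, $v\notin Q$ handled by producing a v-structure $v\to i\leftarrow w$ that transfers to $\HH$ and then invoking acyclicity via the edge $i\to m$) cover everything, since $v\notin Q$ forces $v\neq i$ and any non-neighbour $w$ of $v$ in $S$ satisfies $w\neq i$. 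You correctly identify the one genuinely delicate point, namely that the king of $S$ in $\GG$ need not survive as a king in $\HH$, which is why passing to the sink of $Q$ is necessary. Your proof also yields \cref{lem: lindner} as a byproduct, since only coordinates of size $2$ and $3$ are consulted. The only cosmetic remark is that for $|S|=2$ the ``king'' condition degenerates to adjacency and the argument is vacuous but still valid.
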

It is then direct from \cref{lem: imset structure} that the characteristic imset is encoded in the sets of size 2 and 3. 
\begin{corollary}
\label{lem: lindner}
\cite[Corollary 2.2.6]{L12} 
Two characteristic imsets $c_\GG$ and $c_\HH$ are equal if and only if $c_\GG(S)=c_\HH(S)$ for all sets $S$ such that $|S|\in\{2,3\}$.
\end{corollary}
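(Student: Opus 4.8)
The plan is to prove the two implications separately; the forward one is immediate, and the reverse one is a short chain through the results already on the table. For the forward direction, if $c_\GG=c_\HH$ as vectors then they agree in every coordinate, hence in particular in every coordinate indexed by a set $S$ with $|S|\in\{2,3\}$.

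For the reverse direction, assume $c_\GG(S)=c_\HH(S)$ for all $S$ with $|S|\in\{2,3\}$. First, I would use the size-$2$ coordinates together with part~(1) of \cref{lem: imset structure} to conclude that $\GG$ and $\HH$ have the same skeleton: nodes $i$ and $j$ are adjacent in $\GG$ iff $c_\GG(\{i,j\})=1$, and likewise for $\HH$, so the adjacency relations coincide. Next, I would use the size-$3$ coordinates together with the (now matched) size-$2$ coordinates and part~(2) of \cref{lem: imset structure} to conclude that $\GG$ and $\HH$ have the same v-structures: $i\to j\leftarrow k$ is a v-structure in $\GG$ iff $c_\GG(\{i,j,k\})=1$ and $c_\GG(\{i,k\})=0$, and both of these values are shared with $\HH$. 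Having the same skeleton and the same v-structures, \cref{thm: verma pearl} gives that $\GG$ and $\HH$ are Markov equivalent, and then \cref{lem: studeny} upgrades this to the full equality $c_\GG=c_\HH$, which is what we wanted.

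The only point requiring a little care is the application of part~(2) of \cref{lem: imset structure}: the condition "$\{i,j,k\}$ forms a v-structure with collider $j$'' reads off both the size-$3$ coordinate $c(\{i,j,k\})$ and the size-$2$ coordinate $c(\{i,k\})$, and one should note that a priori $c(\{i,j,k\})=1$ could be witnessed by $i$ or by $k$ rather than by $j$, but the hypothesis $c(\{i,k\})=0$ forces the edge $i\,\text{--}\,k$ to be absent and hence rules those alternatives out. Beyond this bookkeeping there is no real obstacle, and the reason the argument stays this short is that it never reasons directly about coordinates indexed by sets of size $\ge 4$: everything is routed through Markov equivalence, which by Verma and Pearl's theorem is completely determined by the skeleton and the v-structures, i.e.\ by exactly the size-$2$ and size-$3$ data.
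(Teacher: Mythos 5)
Your argument is correct and is exactly the chain the paper intends: the paper states this corollary as a citation to \cite{L12} and remarks that it is ``direct from'' \cref{lem: imset structure} together with \cref{thm: verma pearl} and \cref{lem: studeny}, which is precisely the route you take (size-$2$ coordinates give the skeleton, size-$3$ together with size-$2$ give the v-structures, Verma--Pearl gives Markov equivalence, and Studen\'y's theorem upgrades that to equality of imsets). Your closing remark about the witness for $c(\{i,j,k\})=1$ is a point internal to the cited lemma rather than a gap in your proof, so nothing further is needed.
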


An edge $i\to j\in\GG$ is \emph{essential} if we have $i\to j\in\DD$ for all $\DD$ in the MEC of $\GG$. 
In this case $i$ is an \emph{essential parent} of $j$, and $j$ is an \emph{essential child} of $i$.
Due to work of Andersson, Madigan, and Pearl \cite{AMP97}, the graphical properties of MECs are well-understood.
For a more thorough background on the statistical side of graphical models we refer to \cite{lauritzen1996, pearl2009causality, S01}. 

A central question within causal discovery is inferring a MEC from data. 
Than is, given i.i.d samples $\mathbf{D}$ from the joint distribution $P$ over $X_1, \dots, X_n$, find the MEC that best encodes the observed conditional independence statements in $\mathbf{D}$. 
This has often been interpreted as finding the DAG (or MEC of DAGs) maximising $\BIC(\GG, \mathbf{D})$, where $\BIC$ denotes the \emph{Bayesian information criterion} \cite{C02, LRS22, LRS20, TBA06}. 
Importantly the $\BIC$ is \emph{score equivalent} and \emph{decomposable} \cite{C02}, that is, it is a linear function over $\CIM_n$. 
Therefore, recovering the $\BIC$-optimal MEC from data can be phrased as a linear program \cite{S05, SHL10}.

Some of the best performing algorithms (GES \cite{C02}, MMHC \cite{TBA06}, and Greedy CIM \cite{LRS20}) were recently shown to be restricted edge-walks over $\CIM_n$ and its faces \cite{LRS20} (including $\CIM_G$). 
Computational data on $\CIM_4$ does however suggest that these edge-walks utilise very few out of all edges possible. 
This raises further question on how connected $G(\CIM_n)$ is and with that how feasible these methods are. 

\section{Diameter of \texorpdfstring{$\CIM$}{CIM} polytopes}
\label{sec: diameter}
The diameter of the polytope gives us an upper bound on the number of steps any edge-walk needs to perform to get from one vertex to another, assuming we are walking optimally. 
In \cite{LRS20}, it was shown that reversing an edge of a DAG $\GG$ either gives a Markov equivalent graph or an edge of $\CIM_G$. 
For any DAG $\GG$ with $i\to j\in \GG$ we define $\GG_{i\leftarrow j}$ to be the directed graph identical to $\GG$ but with the edge $i\to j$ reversed.  
\begin{theorem}\cite{LRS20}
\label{thm: turn pair}
Let $\GG$ be a DAG with skeleton $G$ and $i\to j\in\GG$. 
If $\GG_{i\leftarrow j}$ is a DAG not Markov equivalent to $\GG$, then $\conv(c_\GG, c_{\GG_{i\leftarrow j}})$ is an edge of $\CIM_G$.
\end{theorem}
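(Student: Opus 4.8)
The plan is to produce a linear functional $w$, a vector indexed by the sets $S\subseteq[n]$ with $|S|\ge 2$, whose maximum over the vertices of $\CIM_G$ is attained exactly at $c_\GG$ and $c_{\GG'}$, where $\GG'\coloneqq\GG_{i\leftarrow j}$. Then $\{x\in\CIM_G:\langle w,x\rangle=\max\}$ is the face with these two vertices, so it equals $\conv(c_\GG,c_{\GG'})$, and since $\GG'$ is not Markov equivalent to $\GG$, \cref{lem: studeny} gives $c_\GG\ne c_{\GG'}$, so this face is $1$-dimensional, that is, an edge. To build $w$ I first compare $c_\GG$ and $c_{\GG'}$. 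Reversing $i\to j$ changes only $\pa(i)$ and $\pa(j)$, so from the definition of the characteristic imset $c_\GG(S)=c_{\GG'}(S)$ whenever $\{i,j\}\not\subseteq S$; and for $S=\{i,j\}\cup T$ a short check using acyclicity of $\GG$ shows $c_\GG(S)=1$ and $c_{\GG'}(S)=0$ exactly when $T\subseteq\pa_\GG(j)$ and $T\not\subseteq\pa_\GG(i)$ (collect these $S$ into a family $D^+$), while $c_{\GG'}(S)=1$ and $c_\GG(S)=0$ exactly when $T\subseteq\pa_\GG(i)$ and $T\not\subseteq\pa_\GG(j)$ (collect these into $D^-$), and on every other coordinate $c_\GG=c_{\GG'}$. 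So $D\coloneqq D^+\cup D^-$ is nonempty; the unique $\subseteq$-largest element of $D^+$ (when $D^+\ne\emptyset$) is $\{j\}\cup\pa_\GG(j)$, and that of $D^-$ is $\{i\}\cup\pa_\GG(i)$.

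Now set $w(S)=N(2c_\GG(S)-1)$ with $N$ a large constant for every $S$ with $\{i,j\}\not\subseteq S$, set $w(S)=0$ for every $S\supseteq\{i,j\}$ with $S\notin D$, and on $D$ put a large positive value on the largest element of $D^+$ and of $D^-$ and small negative values on the remaining members of $D^+$ and of $D^-$, calibrated so that $\sum_{S\in D^+}w(S)=\sum_{S\in D^-}w(S)$. Because $c_\GG$ is $1$ on $D^+$ and $0$ on $D^-$ while $c_{\GG'}$ is the reverse, this calibration forces $\langle w,c_\GG\rangle=\langle w,c_{\GG'}\rangle$; and any vertex $c_\HH$ of $\CIM_G$ that disagrees with $c_\GG$ on some $S$ with $\{i,j\}\not\subseteq S$ satisfies $\langle w,c_\HH\rangle\le\langle w,c_\GG\rangle-N+O(1)$, hence is not a maximiser for $N$ large. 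So I may assume $\HH$ is a DAG with skeleton $G$ whose imset agrees with $c_\GG$ on every $S$ with $\{i,j\}\not\subseteq S$; by \cref{lem: imset structure} this means $\HH$ has exactly the $v$-structures of $\GG$ apart from ones living on triples $\{i,j,k\}$, so $\HH$ either keeps the edge $i\to j$ or reverses it.

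The crux is the claim that for such an $\HH$ with $i\to j$ one has $c_\HH(S)=c_\GG(S)$ on every coordinate outside $D^+$, while on $S=\{i,j\}\cup T\in D^+$ one has $c_\HH(S)=1$ if and only if $T\subseteq\pa_\HH(j)$ — in words, on these coordinates the witness in the definition of $c_\HH$ can always be taken to be $j$ itself, never some other node of $T$. Granting this (and its mirror for $i\leftarrow j$): $\{S\in D^+:c_\HH(S)=1\}$ consists of the sets $\{i,j\}\cup T$ with $T\subseteq\pa_\HH(j)\cap(\pa_\GG(j)\setminus\{i\})$ and $T\not\subseteq\pa_\GG(i)$, and this family contains the largest element $\{j\}\cup\pa_\GG(j)$ of $D^+$ if and only if $\pa_\GG(j)\setminus\{i\}\subseteq\pa_\HH(j)$, if and only if $c_\HH=c_\GG$; so if $\HH$ keeps $i\to j$ and $c_\HH\ne c_\GG$, then $c_\HH$ vanishes on $\{j\}\cup\pa_\GG(j)$, and then $c_\HH=c_{\GG'}$ (agreement off $D^+$) unless $c_\HH$ is still $1$ on some smaller element of $D^+$, in which case the calibration of $w$ makes $\langle w,c_\HH\rangle<\langle w,c_\GG\rangle$; the case $i\leftarrow j$ is symmetric via $D^-$. (When $|D|=1$ this is all vacuous: the coordinate face $\{x\in\CIM_G:x_S=c_\GG(S)\text{ for all }S\notin D\}$ is already $\conv(c_\GG,c_{\GG'})$.) The main obstacle is proving the claim. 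If a node $v\in T$ witnessed $c_\HH(\{i,j\}\cup T)=1$, then $i\to v$ and $j\to v$ in $\HH$; but $v\to j$ in $\GG$, and since $\GG_{i\leftarrow j}$ is acyclic and contains $v\to j\to i$ it cannot contain $i\to v$, so $\HH$ must have reversed both $\{j,v\}$ and $\{i,v\}$; one then shows, by tracking how these forced reversals propagate, that $\HH$ would have to alter a $v$-structure not supported on a triple $\{i,j,k\}$ — contradicting the reduction above — or else create a directed cycle in $\HH$. Carrying out this propagation argument, which uses only acyclicity of $\GG$ and of $\GG_{i\leftarrow j}$ (the two places the hypotheses enter), is the technical heart of the proof.
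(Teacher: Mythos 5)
This theorem is quoted from \cite{LRS20}; the paper gives no proof of it, so I can only assess your argument on its own terms. Your overall strategy --- exhibit a linear functional $w$ whose maximal face over $\CIM_G$ is exactly $\conv(c_\GG,c_{\GG_{i\leftarrow j}})$ --- is the right kind of argument, and your preliminary analysis is correct: reversing $i\to j$ only alters coordinates $S\supseteq\{i,j\}$, and your description of $D^+$ and $D^-$ (including the acyclicity check that rules out a witness inside $T$) is accurate. The reduction, via the $\pm N$ weights, to DAGs $\HH$ whose imset agrees with $c_\GG$ on all $S$ with $\{i,j\}\not\subseteq S$ is also fine.

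The problem is that the entire argument rests on the ``crux claim'' --- that such an $\HH$ with $i\to j\in\HH$ satisfies $c_\HH=c_\GG$ off $D^+$ and, on $D^+$, takes the value $1$ exactly when $j$ itself is the witness --- and you do not prove it. You take only the first step (a witness $v\in T$ would force $\HH$ to reverse both $v\to j$ and $v\to i$) and then write that ``one then shows, by tracking how these forced reversals propagate,'' that this leads to a forbidden v-structure change or a cycle, calling this ``the technical heart of the proof.'' That propagation argument is precisely where all the difficulty of the theorem lives: it must rule out witnesses in $T$ for \emph{every} $S\supseteq\{i,j\}$ (not only those in $D^+$, since your claim also asserts agreement with $c_\GG$ on the zero-weight coordinates $S\supseteq\{i,j\}$, $S\notin D$, and on $D^-$), and it must do so using only the constraints on $2$- and $3$-element coordinates. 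Without it the proof is incomplete. There is also a smaller slip in the endgame: if $c_\HH$ vanishes on all of $D^+$ you conclude $c_\HH=c_{\GG'}$ ``by agreement off $D^+$,'' but $c_{\GG'}$ equals $1$ on $D^-$ while your claim forces $c_\HH=c_\GG=0$ there, so when $D^-\neq\emptyset$ such an $\HH$ is not $c_{\GG'}$; it is instead excluded because its contribution on $D$ is $0$, strictly below the calibrated value $V>0$ achieved by $c_\GG$ and $c_{\GG'}$. That is repairable, but the missing propagation lemma is a genuine gap.
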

Using this we can show an upper bound on the diameter of $\CIM_G$.

\begin{proposition}
Let $G=([n], E)$ be an undirected graph. Then $\diam(\CIM_G)\leq |E|$. 
\end{proposition}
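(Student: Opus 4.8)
The plan is to show that any two vertices of $\CIM_G$ can be connected by a walk of length at most $|E|$ in the vertex-edge graph $G(\CIM_G)$, using only the edges supplied by \cref{thm: turn pair}, i.e.\ single-edge reversals. Fix two DAGs $\GG$ and $\HH$ with skeleton $G$. Since both have skeleton $G$, they differ only in the orientations assigned to the $|E|$ edges of $G$; let $F\subseteq E$ be the set of edges oriented differently in $\GG$ than in $\HH$. I would argue by induction on $|F|$ that $c_\GG$ and $c_\HH$ are joined by a path of length at most $|F|\le|E|$ in $G(\CIM_G)$. If $F=\emptyset$ then $\GG$ and $\HH$ have the same skeleton and orientations, hence are equal (so certainly Markov equivalent, giving $c_\GG=c_\HH$), and there is nothing to prove.

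For the inductive step, the key claim is: if $\GG\ne\HH$ agree on some subset of orientations, there is an edge $e\in F$ such that reversing $e$ in $\GG$ yields a DAG $\GG'$ (still with skeleton $G$) that agrees with $\HH$ on one more edge. The natural candidate is to look at the induced orientation of $\GG$ on the edges of $F$ together with the acyclicity of $\HH$. Concretely, consider the edges of $F$ as a subgraph; I want to find an edge $i\to j\in\GG$ with $i\to j\in F$ (so $j\to i\in\HH$) whose reversal does not create a directed cycle in $\GG$. Such an edge exists by a standard topological-order argument: take a topological ordering $\prec$ of $\HH$, and among all edges of $F$, pick one, say $i\to j$ in $\GG$, for which $i$ is $\prec$-maximal (note $j\prec i$ since $j\to i\in\HH$). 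Reversing $i\to j$ to $i\leftarrow j$ in $\GG$ cannot create a directed cycle: such a cycle would have to use a directed path from $i$ to $j$ in $\GG$, and one checks that the last edge on that path leaving the "$\HH$-later" side would be an edge of $F$ with source $\prec$-later than $i$, contradicting maximality. This produces a DAG $\GG'=\GG_{i\leftarrow j}$ with skeleton $G$ and $|F'|=|F|-1$, where $F'$ is the disagreement set between $\GG'$ and $\HH$.

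To conclude I must check that each step of this process is genuinely an edge of $\CIM_G$, not a move between Markov-equivalent DAGs. If $c_\GG=c_{\GG'}$, then by \cref{lem: studeny} $\GG$ and $\GG'$ are Markov equivalent; in that case I simply discard that step (the two imsets are literally equal, so the walk is not lengthened and the induction hypothesis applied to $\GG'$ finishes the job), and otherwise \cref{thm: turn pair} guarantees $\conv(c_\GG,c_{\GG'})$ is an edge of $\CIM_G$. Either way, $c_\GG$ and $c_{\GG'}$ are connected by a path of length $\le 1$ in $G(\CIM_G)$, and by induction $c_{\GG'}$ and $c_\HH$ are connected by a path of length $\le|F|-1$; concatenating gives length $\le|F|\le|E|$. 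Taking the maximum over all pairs of vertices yields $\diam(\CIM_G)\le|E|$.

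The main obstacle I anticipate is the acyclicity bookkeeping in the inductive step: one must be careful that the reversal not only keeps $\GG'$ acyclic but also that the resulting disagreement set with $\HH$ has shrunk by exactly one (the reversal does not accidentally flip an edge into disagreement, which is automatic here since only $e$ changes orientation) and that over the whole walk we never leave the face $\CIM_G$ (automatic, since the skeleton $G$ is preserved at every step). The topological-order selection argument is the crux and should be stated carefully, but it is a routine application of the fact that any DAG on a fixed skeleton can be transformed into any other by a sequence of single-edge reversals through DAGs.
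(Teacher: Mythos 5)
Your overall strategy coincides with the paper's: induct on the set $F$ of edges on which $\GG$ and $\HH$ disagree, reverse one disagreeing edge at a time so that every intermediate graph is a DAG with skeleton $G$, and invoke \cref{thm: turn pair}, discarding any step where the two graphs happen to be Markov equivalent since the imsets then coincide. That bookkeeping is fine. The gap is in your selection rule. Choosing $i\to j\in F$ so that the $\GG$-source $i$ is maximal in a topological order $\prec$ of $\HH$ does \emph{not} guarantee that the reversal preserves acyclicity: your claimed contradiction (``the last edge of the obstructing path leaving the $\prec$-later side is an edge of $F$ with source strictly $\prec$-later than $i$'') fails when that edge itself has source $i$. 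Concretely, take $G$ the triangle on $\{1,2,3\}$, $\HH=\{1\to 2,\ 2\to 3,\ 1\to 3\}$ with topological order $1\prec 2\prec 3$, and $\GG=\{3\to 1,\ 1\to 2,\ 3\to 2\}$. Then $F=\{3\to 1,\ 3\to 2\}$ (in the $\GG$-orientation) and both edges have the $\prec$-maximal source $3$, so your rule permits reversing $3\to 2$; but this creates the cycle $3\to 1\to 2\to 3$. The obstructing path $3\to 1\to 2$ descends below $i=3$ already at its first edge $3\to 1$, whose source is $i$ itself, so maximality of $i$ yields no contradiction --- and indeed the reversal genuinely fails.

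The existence statement you need is true, but it requires a finer choice, which is exactly what the paper supplies. It partially orders the disagreeing edges by $i'\to j'\preceq i\to j$ if and only if $j'\in\de_\GG(j)$ or ($j'=j$ and $i\in\de_\GG(i')$), i.e.\ heads are compared by descendance in $\GG$ and, for equal heads, tails in reverse. If $i\to j$ is $\preceq$-maximal among the disagreeing edges, then every edge of an alternative directed path $i\to\dots\to j$ in $\GG$ lies strictly above $i\to j$, hence is an edge on which $\GG$ and $\HH$ agree; together with $j\to i\in\HH$ this would produce a directed cycle in $\HH$, a contradiction, so the reversal is safe. In the example above this order selects $3\to 1$ rather than $3\to 2$, which is the correct move. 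To salvage your formulation you would at minimum have to break ties among edges sharing the $\prec$-maximal source (for instance by descendance of their targets in $\GG$) and then redo the cycle argument; as written, the crux step does not go through.
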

Here we will use the same argument used in \cite[Proposition 3.5]{LRS20}.
\begin{proof}
Let $\GG$ and $\HH$ be two DAGs with skeleton $G$. 
We claim that we can always reverse at least one edge $i\to j$ in $\GG$, such that $i\leftarrow j$ is in $\HH$, and reach another DAG. 
As $\GG$ and $\HH$ share skeleton, and can thus at most differ on $|E|$ edges there exists a sequence of at most $|E|$ edges such that after reversing each we have a new DAG, and in the end we reach $\HH$.
From \cref{thm: turn pair} we have that each of these moves either correspond to an edge or a vertex of $\CIM_G$, and hence the result follows. 

To this end we let $\mathcal{C}$ be the set of edges that differ between $\GG$ and $\HH$. 
We impose a partial order on $\mathcal{C}$ as $i'\to j'\preceq i\to j$ if and only if $j'\in\de_\GG(j)$ or, if $j'=j$, $i\in\de(i')$. 
That is, we sort the children according to $\GG$ and the parents in reverse.
Let $i\to j$ be a maximal edge in $\mathcal{C}$. 
We claim that reversing $i\to j$ does not create a cycle. 

For the sake of contradiction, assume it does.
That is, we have another path $i\to \dots\to j$ in $\GG$. 
As every edge in this path is greater than $i\to j$, according to our partial order, all of these edges must be present in $\HH$. 
However $i\leftarrow j$ is in $\HH$, contradicting that $\HH$ was a DAG. 
Then the result follows.
\end{proof}
Computational data on random graphs ($n\leq 9$) does however suggest that the above proof utilise very few of the edges of $\CIM_G$.
In \cite{LRS22} all edges of $\CIM_G$ when $G$ is a tree were determined. 
Hence the upper bound $\diam \CIM_G\leq |E|$ can be improved, at least when $G$ in this case. 

\subsection{Trees}
\label{subsec: trees}
As all edges of $\CIM_G$ are known when $G$ is a tree we will begin with this case. 
First however, let us recall the results of \cite{LRS22}. 
Let $\GG$ and $\HH$ be two essential graphs with skeleton $G$ and assume that $G$ is a tree. 
Denote $N_i=\{S\cup \{i\}\colon S\subseteq \ne_G(i)\text{ and } |S|\geq 2\}$ and define 
\[
\Delta(\GG, \HH)\coloneqq \{i\in[n]\colon c_\GG|_{N_i}\neq c_\HH|_{N_i}\}.
\]
Equivalently $\Delta(\GG, \HH)$ is the set of all vertices $i$ such that $\GG$ has a v-structure at $i$ that $\HH$ does not have, or vice versa \cite{LRS22}. 
For any tree $G=([n], E)$ and subset $S\subseteq [n]$ we denote with $\spans(S)$ the vertices of the unique spanning tree of $S$ in $G$. 
Moreover we say that $i\in G$ is an \emph{internal node} of $G$ if $i$ is not a leaf, and the graph induced by $G$ on the internal nodes is denoted $G^\circ$.  

\begin{definition}
[Essential flip]
\label{def: essential flip}
Let $\GG$ and $\HH$ be two non-Markov equivalent essential graphs with skeleton $G$, a tree, and denote $\Delta=\Delta(\GG, \HH)$. 
Assume that both $\GG|_{\spans(\Delta)}$ and $\HH|_{\spans(\Delta)}$ do not contain any undirected edges. 
Assume moreover that each edge of $\GG$ and $\HH$ differ on $G|_{\spans(\Delta)}$. 
Then we say that the pair $\{\GG, \HH\}$ is an essential flip. 
\end{definition}
The importance of essential flips is shown in the following theorem. 
\begin{theorem}\cite{LRS22}
\label{thm: edges of trees characterization}
If $G$ is a tree, then $\conv(c_\GG, c_\HH)$ is an edge of $\CIM_G$ if and only if the pair $\{\GG, \HH\}$ is an essential flip. 
\end{theorem}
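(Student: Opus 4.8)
The plan rests on two structural facts about the tree case. First, every orientation of a tree is acyclic, so the vertices of $\CIM_G$ are exactly the characteristic imsets of orientations of $G$, and the coordinate indexed by a set $S=\{i\}\cup T$ with $T\subseteq\ne_G(i)$, $|T|\ge 2$, equals $1$ precisely when every element of $T$ is a parent of $i$; thus it depends only on the orientations of the edges incident to $i$ — in particular $c_\GG|_{N_i}$ is determined by $\pa_\GG(i)$, and flipping an edge $\{a,b\}$ affects only the coordinates of $N_a$ and $N_b$. Second, $\CIM_G$ is a $0/1$-polytope, so $\conv(c_\GG,c_\HH)$ is an edge if and only if $\tfrac12(c_\GG+c_\HH)$ is not a convex combination of other vertices, and such a combination may only involve imsets agreeing with $c_\GG$ (equivalently $c_\HH$) on every coordinate outside $D\coloneqq\{S\colon c_\GG(S)\neq c_\HH(S)\}$; here every coordinate of $D$ is an $N_i$-coordinate for some $i\in\Delta\coloneqq\Delta(\GG,\HH)$.

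For the direction ``essential flip $\Rightarrow$ edge'' I would produce an explicit linear functional $w$ with $\langle w,c_\GG\rangle=\langle w,c_\HH\rangle>\langle w,c_\DD\rangle$ for every other orientation $\DD$. Using the essential-flip hypotheses — that $\GG$ and $\HH$ are fully oriented on $\spans(\Delta)$ and differ there on every edge, so $\HH$ restricted to that subtree is the total reversal of $\GG$ — I would put weight only on the coordinates of $N_i$ for $i\in\spans(\Delta)$, rewarding the v-structure pattern of $\GG$ and that of $\HH$ equally, while any orientation that reverses some but not all edges of $\spans(\Delta)$ relative to $\GG$ strictly loses: at a vertex $v$ of $\spans(\Delta)$ on the interface between the reversed and unreversed parts, $\pa_\DD(v)$ is forced into a configuration matching neither $\pa_\GG(v)$ nor $\pa_\HH(v)$, so some $N_v$-coordinate moves the wrong way. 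A small generic perturbation in the coordinates on which $\GG$ and $\HH$ already agree then rules out any imset differing from both only outside $\spans(\Delta)$.

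For ``edge $\Rightarrow$ essential flip'' I argue the contrapositive: if $c_\GG\neq c_\HH$ but the pair is not an essential flip, I exhibit orientations $\GG',\HH'$, distinct from both $\GG$ and $\HH$, with $c_{\GG'}+c_{\HH'}=c_\GG+c_\HH$, so that $\tfrac12(c_\GG+c_\HH)$ is also the midpoint of $\conv(c_{\GG'},c_{\HH'})$ and the segment is not an edge. If $\spans(\Delta)$ carries an undirected edge in $\GG$ (or $\HH$), reorient it inside the MEC and use the resulting member to build the swap. Otherwise some edge $e$ of $\spans(\Delta)$ is oriented the same way in $\GG$ and $\HH$; deleting $e$ splits $\spans(\Delta)$ into two parts, each meeting $\Delta$ by minimality of the span, and I let $\GG'$ reverse (relative to $\GG$) exactly the $\GG$-vs-$\HH$ differing edges in one part and $\HH'$ those in the other, so that $\{\pa_{\GG'}(v),\pa_{\HH'}(v)\}=\{\pa_\GG(v),\pa_\HH(v)\}$ at every $v$ by the vertex-local nature of the coordinates, whence $c_{\GG'}+c_{\HH'}=c_\GG+c_\HH$.

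The main obstacle I anticipate is the strictness in the first direction — identifying, for a hybrid orientation, the precise $N_v$-coordinate it must violate, and choosing the weights of $w$ so that gains and losses along a partially reversed subtree cannot cancel. By contrast the reduction that lets us disregard coordinates outside $\spans(\Delta)$, and the average-preserving swap used in the converse, should be routine once the vertex-local description of the coordinates is in hand.
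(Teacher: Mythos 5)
This statement is quoted from \cite{LRS22}; the paper at hand gives no proof of it (the adjacent Theorem~\ref{thm: subtree condition} is the DAG-level reformulation it relies on), so your proposal can only be judged on its own terms. Your setup and your converse direction are sound: the observation that in a tree every nonzero coordinate is an $N_i$-coordinate determined by $\pa(i)$, the $0/1$-midpoint criterion, and the averaging swap $c_{\GG'}+c_{\HH'}=c_\GG+c_\HH$ obtained by splitting $\spans(\Delta)$ at an edge on which $\GG$ and $\HH$ agree (with each side meeting $\Delta$) are all correct, and the undirected-edge case does reduce to the agreeing-edge case after choosing suitable DAG representatives, though you should say so explicitly rather than gesture at ``build the swap.''

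The genuine gap is in the direction ``essential flip $\Rightarrow$ edge,'' and it sits exactly where you flagged it. Your key claim --- that a hybrid orientation $\DD$ reversing some but not all span edges must, at an interface vertex $v$, have $\pa_\DD(v)$ matching neither $\pa_\GG(v)$ nor $\pa_\HH(v)$ and hence lose on some $N_v$-coordinate --- is false as stated. The imset coordinates in $N_v$ only record subsets of $\pa(v)$ of size at least $2$, so if $v$ has at most one span-neighbour on the relevant side (the situations labelled IV--VI in Theorem~\ref{thm: subtree condition}), a hybrid can have $c_\DD|_{N_v}$ equal to $c_\GG|_{N_v}$ or $c_\HH|_{N_v}$ even though $\pa_\DD(v)$ agrees with neither, and the ``interface penalty'' simply migrates or vanishes. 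Whether such an evasion is possible depends on data outside $\spans(\Delta)$ --- whether $v$ has parents outside the span, and whether certain neighbours have essential parents --- which is precisely why the essential-flip condition is phrased at the level of essential graphs and why its DAG-level translation carries the local criteria of Theorem~\ref{thm: subtree condition}. Until you construct the separating functional and carry out that case analysis (or an equivalent argument ruling out every mixed $\DD$ that agrees with $c_\GG$ and $c_\HH$ off the disagreement set), the forward implication is an assertion, not a proof.
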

As we prefer to work with DAGs as opposed to essential graphs we also have an alternative characterization. 

\begin{theorem}
\label{thm: subtree condition}
Suppose that $\GG$ and $\HH$ are DAGs on the same skeleton $G$ that is a tree. Assume the edges that differ between $\GG$ and $\HH$ form a subtree $T$ of $G$. 
Suppose further that $\Delta(\GG,\HH)\neq\emptyset$. Then the essential graphs of $\HH$ and $\GG$ form an essential flip if and only if each internal node $i$ of $T$ satisfy the conditions given below. 
We use notation $\{\child_i\}=T\cap \ch_\GG(i)$ and $\{\parent_i\}=T\cap \pa_\GG(i)$, when those are unique.

\smallskip
\noindent
\begin{tabular}{| l | l | l | l |}
\hline
& $| T\cap \pa_\GG(i)| $ & $|T\cap \ch_\GG(i)| $ & Local criteria for $\GG$ and $\HH$ to form essential flip\\
\hline
\hline
I & $\ge 2$ &$\ge 2$ & \\
\hline
II & $\ge 2$ &$0$ & \\
\hline
III& $0$ & $\ge 2$& \\
\hline
IV & $\ge 2$ &$1$ & $ | \pa_\GG(i)\setminus T|\ge 1$ or \\
 & & & if $\exists$ v-structure at $\child_i$ in $\GG$, then \\
 &&& $\child_i$ has essential parent in $\HH$ \\
\hline
V & $1$ &$\ge 2$ & $ | \pa_\GG(i)\setminus T|\ge 1$ or \\
 & & & if $\exists$ v-structure at $\parent_i$ in $\HH$, then \\
 &&& $\parent_i$ has essential parent in $\GG$ \\
\hline
VI & $1$ &$1$ & if there are nodes of  $\Delta$ in both\\ 
&&& connected components of $T\setminus \{i\}$\\
& & & then $|\pa_\GG(i)\setminus T|\ge 1$ or  $\child_i$\\
& & & has essential parent in $\HH$ and $\parent_i$ \\ 
&&& has essential parent in $\GG$.\\
\hline
\end{tabular}
\end{theorem}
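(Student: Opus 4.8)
The plan is to rewrite the essential-flip condition of Definition~\ref{def: essential flip}, which is phrased for the essential graphs $\GG^{*}$ and $\HH^{*}$ of $\GG$ and $\HH$, as a condition on $\GG$ and $\HH$ themselves, and then to check it one internal node of $T$ at a time. Since essential graphs preserve the skeleton and $\Delta(\GG,\HH)\neq\emptyset$ forces $\GG$ and $\HH$ (hence $\GG^{*}$ and $\HH^{*}$) to be non-Markov equivalent, the first clause of Definition~\ref{def: essential flip} is automatic; what must be characterised is (b) that neither $\GG^{*}|_{\spans(\Delta)}$ nor $\HH^{*}|_{\spans(\Delta)}$ has an undirected edge and (c) that every edge of $G|_{\spans(\Delta)}$ is oppositely oriented in $\GG^{*}$ and $\HH^{*}$.

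First I would record the facts particular to trees. In a tree two neighbours of a vertex are never adjacent, so a vertex carries a v-structure exactly when it has at least two parents; hence $i\in\Delta$ iff $\pa_\GG(i)$ and $\pa_\HH(i)$ have different families of $\ge2$-element subsets, which forces $\Delta\subseteq V(T)$. Because $T$ is connected in the tree $G$ it equals the induced subgraph $G|_{V(T)}$, so $\spans(\Delta)\subseteq V(T)$ gives $G|_{\spans(\Delta)}\subseteq T$; in particular every edge of $G|_{\spans(\Delta)}$ already flips between $\GG$ and $\HH$. Moreover, in a triangle-free graph the only orientation rule that can ever fire while forming the essential graph is the one turning $a\to b - c$ (with $a\not\sim c$) into $a\to b\to c$; consequently an edge $i\to j$ of $\GG$ is essential iff $j$ is a v-node of $\GG$, or $i$ or some ancestor of $i$ in $\GG$ is a v-node of $\GG$. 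Combining these, (b) and (c) hold simultaneously iff \emph{every edge of $G|_{\spans(\Delta)}$ is essential in both $\GG$ and $\HH$}.

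It remains to turn this into the table. Flipping the $T$-edges at an internal node $i$ of $T$ turns the $T$-parents of $i$ in $\GG$ into $T$-children in $\HH$ and vice versa while leaving the outside parents $\pa_\GG(i)\setminus T=\pa_\HH(i)\setminus T$ fixed, so $i$ is a v-node of $\GG$ iff $|\pa_\GG(i)|\ge2$ and a v-node of $\HH$ iff $|T\cap\ch_\GG(i)|+|\pa_\GG(i)\setminus T|\ge2$. When $|T\cap\pa_\GG(i)|$ and $|T\cap\ch_\GG(i)|$ are both $\ge2$ (row I), $i$ is a v-node in both graphs, so every $G$-edge at $i$ is essential on both sides and nothing extra is needed; rows II and III are the analogous one-sided situations and are likewise unconditional. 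In rows IV and V the node $i$ meets $T$ in a single pendant edge on one side, to $\child_i$ (resp.\ to $\parent_i$): all edges at $i$ other than that pendant one are either automatically essential in both graphs, or not in $G|_{\spans(\Delta)}$, or governed by the condition at their far endpoint, while for the pendant edge — and only on the graph in which $i$ fails to be a v-node — propagating the orientation rule shows it is essential iff $i$ has an outside parent or the orientation is supplied from the $\child_i$ (resp.\ $\parent_i$) side, i.e.\ $\child_i$ has an essential parent in $\HH$ (resp.\ $\parent_i$ has an essential parent in $\GG$); the table phrases the trigger for the latter alternative through the presence of a v-structure at $\child_i$ in $\GG$ (resp.\ at $\parent_i$ in $\HH$), which is the local reason that vertex lies in $\spans(\Delta)$. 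Row VI superposes the two pendant sides, the new point being that when $\Delta$ lies in only one of the two components of $T\setminus\{i\}$ the vertex $i$ is not in $\spans(\Delta)$ and nothing is required, whereas otherwise both edges at $i$ lie in $G|_{\spans(\Delta)}$ and the two propagation conditions must hold together. Carrying this out at every internal node, together with a short direct check of the boundary cases — a single-edge $T$, which has no internal nodes and where one verifies (b) and (c) by hand, and edges of $G|_{\spans(\Delta)}$ both of whose endpoints are leaves of $T$ — establishes both implications.

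I expect the main obstacle to be exactly this bookkeeping in rows IV--VI: one must keep track simultaneously of whether a given $T$-edge lies in $G|_{\spans(\Delta)}$ (so that (c) constrains it at all) and of how essentiality propagates across a vertex that is a v-node in one of $\GG,\HH$ but not the other, and across the boundary of $T$ where the outside parents of $i$ enter. Extracting from this the precise dichotomy in each row — between $i$ having an outside parent and the orientation being supplied by $\child_i$ or $\parent_i$ — and confirming that rows I--III really are unconditional, is where the care lies; the rest is a routine, if lengthy, verification.
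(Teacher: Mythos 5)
The paper itself gives no proof of this theorem: it is stated as an ``alternative characterization'' immediately after \cref{thm: edges of trees characterization} and the argument is omitted, so there is nothing to compare your write-up against line by line. Judged on its own terms, your overall framework is the right one. Reducing \cref{def: essential flip} to the single condition ``every edge of $G|_{\spans(\Delta)}$ is essential in both $\GG$ and $\HH$'' is correct (since $\Delta\subseteq V(T)$ and $T$ is connected, $G|_{\spans(\Delta)}\subseteq T$, so the ``edges differ'' clause is automatic once those edges are directed in both essential graphs), and the tree-specific description of essential edges via v-nodes and the single Meek rule that can fire in a triangle-free graph is the correct engine for the local analysis.

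The gap is in the translation to the table, which is where the actual content of the theorem sits. Your justification for rows II and III --- that they are ``analogous one-sided situations'' to row I and hence unconditional --- does not work: in row II the node $i$ is a v-node in $\GG$, but in $\HH$ its parent set is $\pa_\GG(i)\setminus T$, which may be empty or a singleton, so the reversed edges $i\to p$ at $i$ are \emph{not} automatically essential in $\HH$. The reason row II is nevertheless unconditional is different: for each $T$-parent $p$ of $i$, either the branch of $T$ through $p$ meets $\Delta$ (in which case essentiality of $i\to p$ in $\HH$ is supplied from the far side, by a v-structure at $p$ in $\HH$ or by the condition imposed at $p$ as an internal node), or that branch misses $\Delta$ entirely and the edge $i-p$ is not in $G|_{\spans(\Delta)}$, so nothing is required of it. The same issue recurs in rows IV and V: when $i$ fails to be a v-node in one of the two graphs, \emph{all} edges at $i$, not just the pendant one, need their essentiality in that graph certified by propagation from elsewhere, and one must simultaneously track which of these edges actually lie in $G|_{\spans(\Delta)}$; this is also what explains the guard ``if $\exists$ v-structure at $\child_i$ in $\GG$'' and the two-component hypothesis in row VI. You explicitly defer exactly this bookkeeping, so what you have is a sound plan with one incorrectly justified step (rows II--III) rather than a proof; completing it requires carrying out the local-to-global argument that a violation at some edge of $G|_{\spans(\Delta)}$ can always be traced to a failed row condition at some internal node of $T$, and conversely.
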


As we know all edges of the $\CIM_G$ polytope we expect to find a good bound on the diameter.

\begin{proposition}
\label{prop: max tree diam}
If $G$ is a tree, then the diameter of $\CIM_G$ is less than or equal the number of internal vertices of $G$. 
\end{proposition}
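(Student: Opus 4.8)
The plan is to induct on the number of internal vertices of $G$, or equivalently to show that from any DAG $\GG$ with skeleton $G$ one can reach, in at most $|G^\circ|$ edge-steps of $\CIM_G$, a fixed ``canonical'' target DAG $\HH_0$ — say the DAG in which every edge is oriented away from a fixed root, so that $\HH_0$ has no v-structures at all and $c_{\HH_0}$ is determined. Since $\diam(\CIM_G)$ is at most twice the eccentricity of $c_{\HH_0}$, this would give $\diam(\CIM_G)\le 2|G^\circ|$; to get the sharper bound $|G^\circ|$ one instead argues directly between two arbitrary DAGs $\GG,\HH$ and controls the walk by $\Delta=\Delta(\GG,\HH)$ together with $\spans(\Delta)$, which lives inside $G^\circ$ whenever $\Delta\ne\emptyset$ (a leaf can never carry a v-structure).

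First I would record the base case: if $\Delta(\GG,\HH)=\emptyset$ then $\GG$ and $\HH$ are Markov equivalent by \cref{lem: imset structure} (same skeleton, same v-structures), so $c_\GG=c_\HH$ by \cref{lem: studeny} and the distance is $0$. Next, assuming $\Delta\ne\emptyset$, the key step is to produce a single edge of $\CIM_G$, i.e.\ an essential flip $\{\GG,\GG'\}$ with $\GG'$ another DAG on skeleton $G$, such that $\Delta(\GG',\HH)$ is ``smaller'' in a way that lets the induction close after at most $|G^\circ|$ steps. The natural move is to pick a node $i\in\Delta$ that is extremal in $\spans(\Delta)$ — a leaf of that spanning subtree — and flip the orientations along the relevant subtree $T$ at $i$ so as to kill (or create) the v-structure at $i$ to match $\HH$, while verifying via \cref{thm: subtree condition} that this flip is genuinely an essential flip. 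Because $i$ is a leaf of $\spans(\Delta)$, the subtree $T$ we must touch is small and the local criteria in cases I–VI of \cref{thm: subtree condition} should be checkable; the flip changes the v-structure status at $i$ (and only there, among $\Delta$-relevant nodes), so $|\Delta|$ drops by at least one, and each internal vertex of $G$ gets ``used up'' at most once because a vertex leaves $\Delta$ for good once its closure-restriction agrees with $\HH$.

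The main obstacle is the bookkeeping in the inductive step: showing that the chosen flip at an extremal node $i$ both (a) satisfies the appropriate local criterion of \cref{thm: subtree condition} — in particular handling the mixed cases IV, V, VI where one needs an extra parent outside $T$ or an essential-parent condition — and (b) strictly decreases the potential $|\Delta|$ (or a refinement of it) without ever re-inflating $\Delta$ at a previously-resolved vertex, so that the total step count is bounded by $|G^\circ|$ rather than something larger. A clean way to organize (b) is to fix a linear extension of a suitable partial order on $\spans(\Delta)$ (sorting by distance from a chosen root of the subtree, as in the partial order used in the proof of the $\diam\le|E|$ proposition), always flip at the current maximal unresolved internal node, and check that resolving node $i$ never disturbs the already-resolved nodes closer to the root; the v-structure at $i$ depends only on $i$'s neighbours, so a flip confined to the pendant part of $T$ below $i$ leaves the rest of the closure-data intact. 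Once the step is established, summing gives at most one flip per internal vertex, i.e.\ $\diam(\CIM_G)\le |G^\circ|$.
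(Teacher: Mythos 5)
Your high-level plan — induct, observe $\Delta(\GG,\HH)\subseteq G^\circ$, and fix one internal node at a time using \cref{thm: subtree condition} — is the same skeleton as the paper's argument. But the proposal has a genuine gap precisely at the point you yourself flag as ``the main obstacle'': you never establish that the flip you want to perform at an extremal node of $\spans(\Delta)$ is actually an essential flip, i.e.\ an edge of $\CIM_G$. In cases IV, V and VI of \cref{thm: subtree condition} the naive flip (reorient $\cl_G(i)$ to agree with $\HH$) can simply fail the local criteria — for instance when $i$ has a unique child $\child_i$ in $T$ carrying a v-structure with no essential parent in the target, or a unique non-essential parent. Saying the criteria ``should be checkable'' is not a proof that they hold, and in general they do not hold for the unmodified flip. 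The paper's proof does concrete repair work here: in case IV it deliberately leaves the edge to the unique child unflipped (so that $\pa_{\GG_1}(r_1)=\emptyset$), and in case V it reorients an entire pendant subtree $G_a$ so that the offending parent $a$ acquires an outgoing edge $a\to a'$ without creating new v-structures. Without some such mechanism your chosen step may not be an edge of the polytope, and the induction cannot start.

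A second, related gap is your assertion that the flip ``changes the v-structure status at $i$ (and only there, among $\Delta$-relevant nodes)'', so that $|\Delta|$ strictly decreases and no resolved vertex is re-inflated. A flip is a reorientation of a whole subtree $T$, and every internal node of $T$ can have its v-structure configuration altered; moreover the repairs needed in cases IV and V (keeping an edge fixed, reorienting a pendant subtree) touch edges outside $\cl_G(i)$. The paper controls this not by tracking $|\Delta|$ but by processing $G^\circ$ rooted at $r_1\in\Delta$ from the root outward, recording the invariant that one never reverses an edge of $\cl_G(r_1)$ already directed as in $\HH$, and then invoking the induction hypothesis on the pendant subtrees via \cref{lem: valid tree extra node} (which is what makes ``one step per internal node'' literal). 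Your potential-function bookkeeping would need an analogous invariant, proved, before the count of at most $|G^\circ|$ steps is justified.
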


To show this we will need a quick lemma that follows from \cref{thm: subtree condition}.

\begin{lemma}
\label{lem: valid tree extra node}
For any given DAG $\GG$ with skeleton $G$ and $i\in G^\circ$ an internal node, assume $T$ is a subtree of $G$.
Let $\GG'$ be the DAG on one extra node, $n+1$, and the edge $n+1\leftarrow i$.
Define $\HH$ and $\HH'$ to be identical to $\GG$ and $\GG'$ except we change the direction of every edge in $T$. 
Then if $\{\GG, \HH\}$ is an essential flip, so is $\{\GG', \HH'\}$. 
\end{lemma}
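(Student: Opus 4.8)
The plan is to verify the hypotheses and local criteria of \cref{thm: subtree condition} for the pair $\{\GG', \HH'\}$, reducing each one to the corresponding statement for $\{\GG, \HH\}$, which holds by assumption. The key structural remark is that $\GG'$ is obtained from $\GG$ by attaching a single pendant node $n+1$ to $i$ as a child, and $n+1\notin T$; likewise $\HH'$ is obtained from $\HH$. Consequently $\GG'$ and $\HH'$ still differ in exactly the edges of $T$, the skeleton $G'$ is again a tree having $T$ as a subtree, the internal nodes of $T$ inside $G'$ are the same as inside $G$, and $\pa_{\GG'}(v)=\pa_\GG(v)$ for every $v\in[n]$ (attaching a child changes no one's parent set), and similarly for $\HH'$.

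Next I would show that attaching the pendant does not disturb the Markov-equivalence structure on $[n]$. Since $n+1$ has a unique neighbour, it can never be the centre of a v-structure nor the endpoint of the non-central arrow of one; hence by \cref{thm: verma pearl} every DAG Markov equivalent to $\GG'$ restricts to one Markov equivalent to $\GG$, and conversely every DAG Markov equivalent to $\GG$ becomes Markov equivalent to $\GG'$ after adding the arrow $i\to n+1$. From this correspondence (and the analogous one for $\HH$) I extract two facts: (a) $\GG'$, $\HH'$ have the same v-structures as $\GG$, $\HH$ at every node of $[n]$ and none at $n+1$, so $\Delta(\GG',\HH')=\Delta(\GG,\HH)\neq\emptyset$ and $\GG'$, $\HH'$ are not Markov equivalent; and (b) for distinct $a,b\in[n]$ the arrow $a\to b$ is essential in $\GG'$ if and only if it is essential in $\GG$, and likewise for $\HH$, because the admissible orientations of the edge $a-b$ over the respective MECs are matched up by the correspondence.

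Finally I would run through the rows of \cref{thm: subtree condition}. For an internal node $v\neq i$ of $T$, every quantity occurring in its row---$|T\cap\pa_\GG(v)|$, $|T\cap\ch_\GG(v)|$, $|\pa_\GG(v)\setminus T|$, the existence of a v-structure at $\child_v$ or $\parent_v$, and whether $\child_v$ (resp.\ $\parent_v$) has an essential parent in $\HH$ (resp.\ $\GG$)---is unchanged on passing to $\GG',\HH'$ by the equality of parent sets together with (a) and (b), so its local criterion transfers verbatim. For $v=i$, in case $i$ is internal in $T$, the sole change is the extra child $n+1\notin T$ of $i$; this leaves $|T\cap\pa_\GG(i)|$, $|T\cap\ch_\GG(i)|$ and $|\pa_\GG(i)\setminus T|$ untouched, so $i$ occupies the same row I--VI for both pairs, the uniqueness of $\child_i$ or $\parent_i$ in $T$ (when it is invoked) persists, and again (a) and (b) keep the v-structure and essential-parent conditions fixed; the criterion transfers. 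If $i$ is a leaf of $T$ or $i\notin T$ there is nothing additional to check. Hence $\{\GG',\HH'\}$ meets all the conditions of \cref{thm: subtree condition} and is an essential flip. The crux is fact~(b): that attaching a pendant vertex does not alter which arrows among the original vertices are essential---this is the one point requiring the graphical characterisation of Markov equivalence rather than mere bookkeeping on the table.
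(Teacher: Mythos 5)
Your proposal is correct and follows essentially the same route as the paper: the paper's (much terser) proof likewise observes that the pendant edge $(n+1)-i$, being oriented $i\to n+1$, cannot participate in any v-structure, deduces from this that essentiality of every other edge is preserved, and then invokes \cref{thm: subtree condition}. Your fact (b) is exactly the crux the paper relies on; you have simply spelled out the bookkeeping on the table that the paper leaves implicit.
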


\begin{proof}
By definition $T$ does not contain the edge $(n+1) - i$. 
If $n+1\leftarrow i$ then the edge $(n+1) - i$ is not part of any v-structure, hence any other edge in $\GG$ is essential if and only if it is essential in $\GG'$.
Thus the result follows directly from \Cref{thm: subtree condition}. 
\end{proof}

\begin{proof}[Proof of \Cref{prop: max tree diam}]
Inductively assume we can transform any directed tree with fewer internal nodes to any other directed three with the same skeleton using only the transformations of \Cref{thm: subtree condition} in at most the number of steps equal to the number of internal nodes. 

Let $\GG$ and $\HH$ be two graphs with skeleton $G$ and let $m$ denote the number of internal vertices. 
It is enough to find DAGs $\GG=\GG_0,\GG_1, \GG_2,\dots, \GG_m=\HH$ such that each pair $\{\GG_k, \GG_{k+1}\}$ is an essential pair or are Markov equivalent, for all $0\leq k\leq m-1$.  
If $\GG$ and $\HH$ are Markov equivalent we are done. 
Let $r_1\in \Delta(\GG, \HH)$.
Notice that $\Delta(\GG, \HH)\subseteq G^\circ$. 
We can imagine $G^\circ$ to be rooted at $r_1$, and thus we will transform vertices from the root and onward. 

Define $\GG_1'$ to be the DAG identical to  $\GG$ outside of $\cl_G(r_1)$ and direct $\cl_G(r_1)$ as in $\HH$. 
Then we have three cases, either $\GG_1'$ is Markov equivalent to $\GG$, $\{\GG_1', \GG\}$ is an essential flip, or $\GG_1'$ and $\GG$ differ on a subtree that is within cases IV, V, or VI of \Cref{thm: subtree condition}. 
In the first and second case we can choose $\GG_1=\GG_1'$. 
In the third case we have three subcases, either we are in case IV, V, or VI. 

If we are in case IV and $\{\GG_1', \GG\}$ is not an essential flip there must be a unique child of $r_1$, say $c$, and there is a v-structure at $c$. 
Then we define $\GG_1$ to be the DAG identical to $\GG_1'$ except flip the edge $r_1\leftarrow c$, that is we preserve the direction of the edge as in $\GG$. 
Then by \Cref{thm: subtree condition} $\{\GG_1, \GG\}$ will be an essential flip. 
Notice that $\pa_{\GG_1}(r_1)=\emptyset$ in this case.

 If we are in case V and $\{\GG_1', \GG\}$ is not an essential flip there must be a unique parent of $r_1$, say $a$ with a unique non essential parent $a'$. 
 Since the edge is non-essential we can direct the subtree in $G\setminus \{r_1\}$ containing $a$, call it $G_{a}$, such that we have $a\to a'$ and we do not change any v-structures. 
 Then we can define $\GG_1$ to be the DAG where we direct the subtree $G_{a}$ as described above, and the rest as $\GG_1'$. 
 Then $\{\GG_1, \GG\}$ will be an essential flip.
 If we are in case VI then we either have $|\pa_\GG(r_1)\setminus T|\geq 1$, in which case $\{\GG_1', \GG\}$ is an essential flip, or $r_1\notin \Delta(\GG, \HH)$, a contradiction from how we choose $r_1$. 
 Notice that regardless of case we obtain $c_{\GG_1}|_{N_{r_1}}=c_{\HH}|_{N_{r_1}}$ and we never change the direction of any edge in $\cl_G(r_1)$ that is already directed as in $\HH$. 

 Let $r_2$ be any node adjacent to $r_1$ in $G^\circ$. 
 If we have $r_2\to r_1$ we either have $r_2\to r_1$ in $\HH$, in which case the result follows from the induction hypothesis and \Cref{lem: valid tree extra node} via considering the subtree containing $r_2$ in $G\setminus\{r_1\}$, or we changed to $\GG_1$ from case three, subcase IV. 
 However, $r_1$ then has $r_2$ as a single parent, and hence the result follows via using the induction hypotheses on the same subtree as before, with the node $r_1$ added. 

 If we have $r_2\leftarrow r_1$ we can repeat the construction as before, except we will use that $r_1\in \pa_{\GG_1}\setminus T$, instead of $r_2\in\Delta(\GG_1, \GG_2)$. 
 Indeed, by the above construction we only have $r_2\leftarrow r_1$ in $\GG_1$ if $r_2\leftarrow r_1$ in $\HH$. 
 Then case one and two follows as before and case three, subcases IV and V are identical.
 However, case three, subcase VI is now true as $\pa_{\GG_1}\setminus T$. 

 Hence we can always continue our construction one edge per internal node. 
 It follows that we at most need $m$ steps to transform $\GG$ to $\HH$. 
 \end{proof}

 \begin{example}
 In \Cref{fig: tree diameter construction} we have repeated the construction we did in the proof of \Cref{prop: max tree diam}. 
 Note however that it is possible to move between the same MECs in two steps, as seen in \Cref{fig: tree diameter opt}.
 \end{example}

 \begin{figure}
 \[
 \begin{tikzpicture}[scale=0.8]
 \begin{scope}
 \node (a) at (0,0) {$a$};
 \node (b1) at (0:1) {$b_1$};
 \node (b2) at (90:1) {$b_2$};
 \node (b3) at (180:1) {$b_3$};
 \node (b4) at (-90:1) {$b_4$};
 \node (c1) at (0:2) {$c_1$};
 \node (c2) at (90:2) {$c_2$};
 \node (c3) at (180:2) {$c_3$};
 \node (c4) at (-90:2) {$c_4$};

 \foreach \from/\to in {a/b1, a/b2, a/b3, a/b4, b1/c1, b2/c2, b3/c3, b4/c4}{
     \draw[->] (\from) -- (\to);
 }
 \end{scope}
 \begin{scope}[shift={(2.5, -5.5)}]
 \node (a) at (0,0) {$a$};
 \node (b1) at (0:1) {$b_1$};
 \node (b2) at (90:1) {$b_2$};
 \node (b3) at (180:1) {$b_3$};
 \node (b4) at (-90:1) {$b_4$};
 \node (c1) at (0:2) {$c_1$};
 \node (c2) at (90:2) {$c_2$};
 \node (c3) at (180:2) {$c_3$};
 \node (c4) at (-90:2) {$c_4$};

 \foreach \from/\to in {a/b1, a/b2, a/b3, a/b4, c1/b1, b2/c2, b3/c3, b4/c4}{
     \draw[->] (\from) -- (\to);
 }
 \end{scope}

 \begin{scope}[shift={(5.5,0)}]
 \node (a) at (0,0) {$a$};
 \node (b1) at (0:1) {$b_1$};
 \node (b2) at (90:1) {$b_2$};
 \node (b3) at (180:1) {$b_3$};
 \node (b4) at (-90:1) {$b_4$};
 \node (c1) at (0:2) {$c_1$};
 \node (c2) at (90:2) {$c_2$};
 \node (c3) at (180:2) {$c_3$};
 \node (c4) at (-90:2) {$c_4$};

 \foreach \from/\to in {a/b1, a/b2, a/b3, a/b4, c1/b1, c2/b2, b3/c3, b4/c4}{
     \draw[->] (\from) -- (\to);
 }
 \end{scope}
 
 \begin{scope}[shift={(8.5,-5.5)}]
 \node (a) at (0,0) {$a$};
 \node (b1) at (0:1) {$b_1$};
 \node (b2) at (90:1) {$b_2$};
 \node (b3) at (180:1) {$b_3$};
 \node (b4) at (-90:1) {$b_4$};
 \node (c1) at (0:2) {$c_1$};
 \node (c2) at (90:2) {$c_2$};
 \node (c3) at (180:2) {$c_3$};
 \node (c4) at (-90:2) {$c_4$};

 \foreach \from/\to in {a/b1, a/b2, a/b3, a/b4, c1/b1, c2/b2, c3/b3, b4/c4}{
     \draw[->] (\from) -- (\to);
 }
 \end{scope}
 \begin{scope}[shift={(11,0)}]
 \node (a) at (0,0) {$a$};
 \node (b1) at (0:1) {$b_1$};
 \node (b2) at (90:1) {$b_2$};
 \node (b3) at (180:1) {$b_3$};
 \node (b4) at (-90:1) {$b_4$};
 \node (c1) at (0:2) {$c_1$};
 \node (c2) at (90:2) {$c_2$};
 \node (c3) at (180:2) {$c_3$};
 \node (c4) at (-90:2) {$c_4$};

 \foreach \from/\to in {a/b1, a/b2, a/b3, a/b4, b1/c1, c2/b2, c3/b3, c4/b4}{
     \draw[->] (\from) -- (\to);
 }
 \end{scope}

\draw[thick, dashed] (1, -1) -- (1.5, -4.5)
    (3.5, -4.5) -- (4.5, -1) 
    (6.5, -1) -- (7.5, -4.5)
    (9.5, -4.5) -- (10, -1);
 
 \end{tikzpicture}
 \]
 \caption{An example of the construction in the proof of \Cref{prop: max tree diam}. The edges of $\CIM_G$ are marked with dashed lines. }
 \label{fig: tree diameter construction}
 \end{figure}
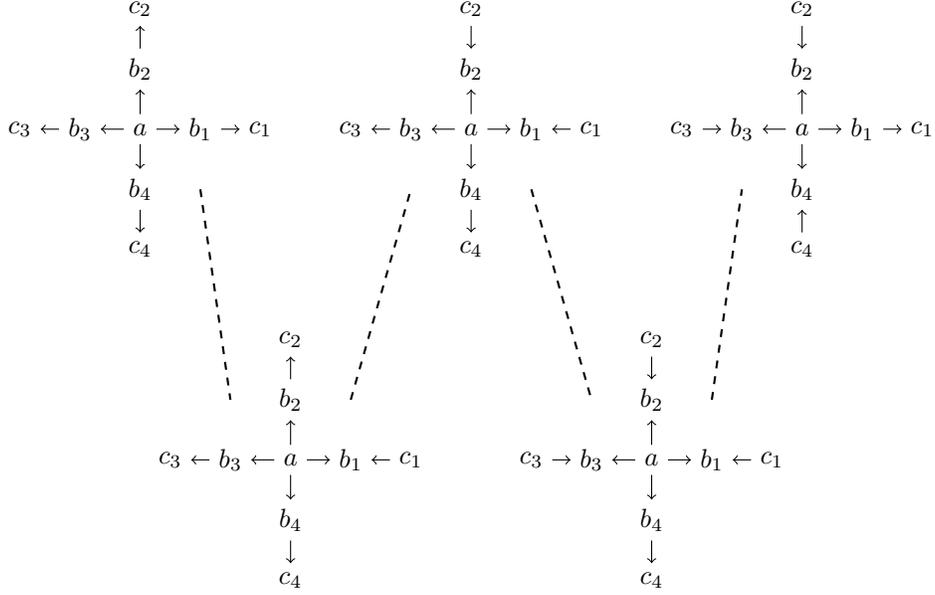

 \begin{figure}
 \[
 \begin{tikzpicture}[scale=0.8]
 \begin{scope}
 \node (a) at (0,0) {$a$};
 \node (b1) at (0:1) {$b_1$};
 \node (b2) at (90:1) {$b_2$};
 \node (b3) at (180:1) {$b_3$};
 \node (b4) at (-90:1) {$b_4$};
 \node (c1) at (0:2) {$c_1$};
 \node (c2) at (90:2) {$c_2$};
 \node (c3) at (180:2) {$c_3$};
 \node (c4) at (-90:2) {$c_4$};

 \foreach \from/\to in {a/b1, a/b2, a/b3, a/b4, b1/c1, b2/c2, b3/c3, b4/c4}{
     \draw[->] (\from) -- (\to);
 }
 \end{scope}
 \begin{scope}[shift={(6,0)}]
 \node (a) at (0,0) {$a$};
 \node (b1) at (0:1) {$b_1$};
 \node (b2) at (90:1) {$b_2$};
 \node (b3) at (180:1) {$b_3$};
 \node (b4) at (-90:1) {$b_4$};
 \node (c1) at (0:2) {$c_1$};
 \node (c2) at (90:2) {$c_2$};
 \node (c3) at (180:2) {$c_3$};
 \node (c4) at (-90:2) {$c_4$};

 \foreach \from/\to in {b1/a, b2/a, b3/a, b4/a, b1/c1, b2/c2, b3/c3, b4/c4}{
     \draw[->] (\from) -- (\to);
 }
 \end{scope}

 \begin{scope}[shift={(12,0)}]
 \node (a) at (0,0) {$a$};
 \node (b1) at (0:1) {$b_1$};
 \node (b2) at (90:1) {$b_2$};
 \node (b3) at (180:1) {$b_3$};
\node (b4) at (-90:1) {$b_4$};
\node (c1) at (0:2) {$c_1$};
\node (c2) at (90:2) {$c_2$};
\node (c3) at (180:2) {$c_3$};
\node (c4) at (-90:2) {$c_4$};

\foreach \from/\to in {a/b1, a/b2, a/b3, a/b4, c1/b1, c2/b2, b3/c3, b4/c4}{
    \draw[->] (\from) -- (\to);
}

\draw[thick, dashed] (-9.5, 0) -- (-8.5, 0)
    (-3.5, 0) -- (-2.5, 0);
\end{scope}
\end{tikzpicture}
\]
\caption{A shorter path than the one constructed in the proof of \Cref{prop: max tree diam}. The edges of $\CIM_G$ are marked with dashed lines.}
\label{fig: tree diameter opt}
\end{figure}
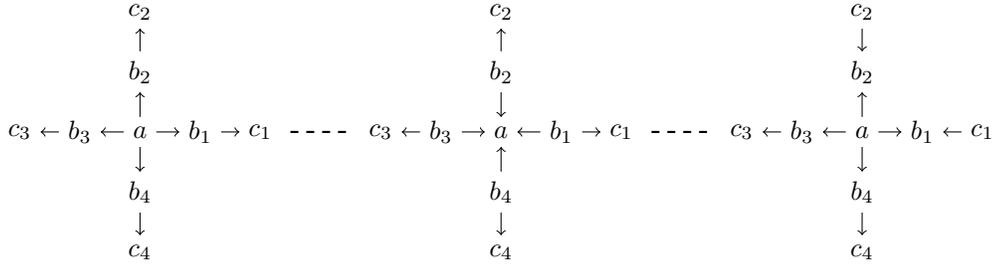
 
\begin{example}
\label{ex: diameter of path}
Let $G=I_n$ the path with $n$ vertices and let $\{\GG_i, \GG_{i+1}\}$ be an essential flip. 
Then the number of v-structures in $\GG_i$ and $\GG_{i+1}$ differ by at most one, as follows by the definition of essential flip. 
If we let $\GG$ be the graph without any v-structures and let $\HH$ be any graph with $\lfloor\frac{n-1}2\rfloor$ v-structures. 
It follows that the distance between $\GG$ and $\HH$ is at least, and in fact equal to, $\lfloor\frac{n-1}2\rfloor$ and thus the diameter of $\CIM_G$ is greater than or equal to $\lfloor\frac{n-1}2\rfloor$. 
To conclude equality it is enough to check that given any two adjacent internal nodes we can, in one move, make sure either one, or none, is a v-structure from any previous position. 
\end{example}

\begin{proposition}
\label{prop: min tree diam}
Let $G$ be a tree and let $p$ be the maximum path length of $G$. 
Then the diameter of $\CIM_G$ is at least $\left\lfloor \frac{p}2\right\rfloor$. 
\end{proposition}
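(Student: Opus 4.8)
The plan is to exhibit two DAGs with skeleton $G$ that are forced to be far apart in $G(\CIM_G)$, using as a monovariant the number of v-structures sitting on a fixed longest path of $G$; this is the natural generalisation of the argument sketched in \cref{ex: diameter of path}.

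Fix a longest path $P=v_0-v_1-\cdots-v_p$ in $G$; we may assume $p\ge 2$, since otherwise $\CIM_G$ is a single point and the claim is trivial. Let $\GG$ be any orientation of $G$ in which every vertex has at most one parent (e.g.\ orient all edges away from $v_0$); then $\GG$ is a DAG with skeleton $G$ and no v-structures. Let $\HH$ be the orientation of $G$ that agrees with $\GG$ on every edge not lying on $P$ and orients each edge of $P$ towards its odd-indexed endpoint. Then $\HH$ is a DAG with skeleton $G$ in which each even-indexed vertex of $P$ is a source and each odd-indexed vertex a sink, so the v-structures of $\HH$ are exactly the colliders $v_{2k-2}\to v_{2k-1}\leftarrow v_{2k}$ at the odd-index internal vertices $v_1,v_3,\dots$ of $P$ (no vertex off $P$, and no even-index vertex of $P$, has two parents in $\HH$). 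The number of odd indices in $\{1,\dots,p-1\}$ is $\lfloor p/2\rfloor$.

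For a DAG $\DD$ with skeleton $G$, let $\phi(\DD)$ be the number of internal vertices of $P$ at which $\DD$ has a v-structure. By \cref{thm: verma pearl} this depends only on the Markov equivalence class, so $\phi$ is well defined on the vertices of $\CIM_G$, with $\phi(c_\GG)=0$ and $\phi(c_\HH)=\lfloor p/2\rfloor$. By \cref{thm: edges of trees characterization} every edge of $\CIM_G$ corresponds to an essential flip, so it suffices to show that an essential flip changes $\phi$ by at most one: granting this, any path from $c_\GG$ to $c_\HH$ in $G(\CIM_G)$ has at least $\phi(c_\HH)-\phi(c_\GG)=\lfloor p/2\rfloor$ edges, whence $\diam\CIM_G\ge\lfloor p/2\rfloor$.

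It remains to prove that $|\phi(\DD_1)-\phi(\DD_2)|\le 1$ for an essential flip $\{\DD_1,\DD_2\}$, and this is the main obstacle. The intended route is via \cref{thm: subtree condition}: choosing DAG representatives so that the edges on which $\DD_1$ and $\DD_2$ differ form a subtree $T$ (which is possible for an essential flip), all edges of $T$ are reversed between $\DD_1$ and $\DD_2$, and every vertex whose v-structure status changes is incident to a reversed edge, hence lies on $T$. Since $T$ and $P$ are connected subgraphs of the tree $G$, their intersection $T\cap P$ is a subpath $v_a-\cdots-v_b$ of $P$. For a vertex $v_c$ strictly inside this subpath whose only $T$-edges are its two $P$-edges and which is neither a local source nor a local sink of $\DD_1|_{T\cap P}$, reversing $T$ leaves its number of parents unchanged, so its v-structure status does not change; thus the change in $\phi$ is carried only by the two ends $v_a,v_b$, by the local extrema of $\DD_1$'s orientation along $T\cap P$, and by the vertices of $P$ at which $T$ branches off $P$. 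Because local sources and local sinks along a path orientation strictly alternate, these contributions largely cancel, and the residual is at most one. Making this cancellation rigorous in the presence of branches of $T$ and of parents of $P$-vertices lying outside $T$ is exactly where the remaining hypotheses of \cref{def: essential flip} — that both essential graphs are fully oriented on $\spans\bigl(\Delta(\DD_1,\DD_2)\bigr)$ and that every such edge is reversed — must be invoked, precisely as in the path case underlying \cref{ex: diameter of path}.
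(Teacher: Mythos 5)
Your overall strategy is the same as the paper's: fix a longest path $P$, exhibit a pair of DAGs with $0$ and $\lfloor p/2\rfloor$ colliders on $P$, note via \cref{thm: edges of trees characterization} that every edge of $\CIM_G$ is an essential flip, and argue that an essential flip changes the collider count along $P$ by at most one. The construction of the extremal pair is correct. The problem is that the one claim carrying all the weight --- $|\phi(\DD_1)-\phi(\DD_2)|\le 1$ for an essential flip --- is exactly what you do not prove: you sketch the alternation of local sources and sinks and then state that making the cancellation rigorous ``in the presence of branches of $T$ and of parents of $P$-vertices lying outside $T$'' remains to be done. Since that inequality is the entire content of the lower bound, this is a genuine gap rather than a complete proof.

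Two observations close the gap, and also show that your setup makes it look harder than it is. First, define $\phi$ to count only colliders $v_{j-1}\to v_j\leftarrow v_{j+1}$ with all three vertices on $P$; this still gives $\phi=0$ and $\phi=\lfloor p/2\rfloor$ on your extremal pair, and now the status of $v_j$ depends only on the orientations of its two $P$-edges, so branches of $T$ off $P$ and parents lying outside $P$ are simply irrelevant --- the difficulty you flag evaporates. Your $\phi$, which counts v-structures at $P$-vertices with possibly off-path spouses, is an unnecessarily strong invariant to control. Second, $T\cap P$ is a subpath $v_0-\dots-v_k$ (two subtrees of a tree intersect in a subtree); adjoin the $P$-neighbours $\alpha$ and $\beta$ of its endpoints when they exist and treat those edges as reversed as well, which \cref{thm: subtree condition} lets you do without affecting the relevant collider count at the boundary. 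Then every internal vertex of the extended subpath has both of its $P$-edges reversed, so a collider is destroyed exactly at each $i\to j\leftarrow k$ pattern and created exactly at each $i\leftarrow j\to k$ pattern; consecutive occurrences of these two patterns alternate along a path, so their numbers differ by at most one. This is precisely the paper's argument.
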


Notice that the maximum path length is one less than the number of vertices in the graph. 
That is, for $I_n$ the maximum path length is $p=n-1$. 

\begin{proof}
Let $P$ be a path of maximal length of $G$. 
Similar to \Cref{ex: diameter of path} it is enough to conclude that any essential flip changes the number of v-structures along $P$ by one.
Then it follows that the distance between two graphs, one with no v-structures in $P$ and one with $\left\lfloor \frac{p}2\right\rfloor$ v-structures, is at least $\left\lfloor \frac{p}2\right\rfloor$. 

Thus given any DAG $\GG$ with skeleton $G$ and a subtree $T'$ fulfilling the conditions of \Cref{thm: subtree condition}. 
Define $T=P\cap T'=v_0-v_1- \dots- v_{k}$ and if $v_0$ and/or $v_k$ are not the endpoints of $P$ we also consider the extra vertices $\alpha$ and $\beta$ defined as $\alpha - v_0-v_1- \dots- v_{k} -\beta\subseteq P$.
Notice that $\alpha$ and $\beta$ might not exist, but that can be thought of as we have $\alpha\leftarrow v_0$ or $v_k\to \beta$ in $G$. 
Moreover, we can assume all arrows in $T\cup\{\alpha, \beta\}$ are reversed.
A priori reversing all edges in $T\cup\{\alpha, \beta\}$ could lead to additional v-structures involving $\alpha$ and $\beta$, however by \cref{thm: subtree condition} we can safely ignore them. 
In this connected part we remove a v-structure whenever we have $i\to j\leftarrow k$ for $i,j,k\in T\cup \{\alpha, \beta\}$ and we add a v-structure whenever we have $i\leftarrow j\to k$, note that these patterns must be interlacing along  $\alpha - v_0-v_1- \dots- v_{k} -\beta$. 
Hence the number of v-structures can differ with at most one and the result follows. 
\end{proof}

Thus the combination of \cref{prop: max tree diam} and \cref{prop: min tree diam} implies the following theorem.

\begin{theorem}
\label{thm: diam tree}
Let $G$ be a tree with $m$ internal nodes and maximum path length $p$. 
Then $\left\lfloor\frac{p}2\right\rfloor\leq\diam(\CIM_G)\leq m$. 
\end{theorem}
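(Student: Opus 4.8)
The plan is essentially bookkeeping: the statement is precisely the conjunction of the two bounds proved just above, so the proof consists of invoking both and recording that nothing further is required.

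First I would appeal to \cref{prop: max tree diam}, which asserts that for any tree $G$ the diameter of $\CIM_G$ is at most the number of internal vertices of $G$; since that number is what we have called $m$, this is exactly the right-hand inequality $\diam(\CIM_G)\le m$. Then I would appeal to \cref{prop: min tree diam}, which asserts that for a tree $G$ with maximum path length $p$ one has $\diam(\CIM_G)\ge\left\lfloor\frac{p}{2}\right\rfloor$; this is the left-hand inequality. Chaining the two gives $\left\lfloor\frac{p}{2}\right\rfloor\le\diam(\CIM_G)\le m$, which is the claim.

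There is no genuine obstacle at this stage: the substantive work is carried out in the two propositions themselves. The upper bound in \cref{prop: max tree diam} is the delicate part, relying on the case analysis of \cref{thm: subtree condition} together with \cref{lem: valid tree extra node} to push the construction one internal node at a time; the lower bound in \cref{prop: min tree diam} rests on the observation that an essential flip alters the number of v-structures along any fixed maximal path by at most one. Since both are available as stated, the only thing to check here is that the two bounds are mutually consistent, which they are — any maximal path of length $p$ contributes $p-1$ vertices that are internal in $G$, so $m\ge p-1\ge\left\lfloor\frac{p}{2}\right\rfloor$ — and so the sandwich is non-vacuous. If a self-contained exposition were wanted one could inline the two proofs, but the natural write-up is simply to cite \cref{prop: max tree diam} and \cref{prop: min tree diam} and conclude.
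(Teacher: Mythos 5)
Your proposal is correct and matches the paper exactly: the theorem is stated there as an immediate consequence of \cref{prop: max tree diam} and \cref{prop: min tree diam}, with no further argument given. The added consistency check ($m\ge p-1\ge\lfloor p/2\rfloor$) is a harmless bonus not present in the paper.
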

Notice that for all trees we have $m\leq n-2$ and for paths we have $p+1=n=m-2$, therefore the diameter of $\CIM_{I_n}$ is linear in $n$. 
Hence the diameter of $\CIM_G$ has a worst case scenario of growing linearly in $n$, when $G$ is a tree.
There are however classes of trees where our lower bound is constant in $n$; if $G$ is a star the above gives us $1\leq \diam\CIM_G\leq 1$, which is consistent with a result of \cite{LRS22} telling us that $\CIM_G$ is a simplex in this case and hence $\diam\CIM_G=1$. 
Computational results on random trees ($n\leq 9$) suggest that our lower bound is tight, while our upper bound is not.

From the perspective of random trees, we expect $m\approx(1-e^{-1})n$ internal nodes (with respect to the uniform distribution), while the expected maximum path length is $p\leq C\sqrt{n}$ for some constant $C$ \cite{ANS22, RS67}.
Further investigation on the expected diameter of $\CIM_G$, for a random tree $G$, would be very interesting.

\subsection{The Whole Polytope}
\label{subsec: cim p}
Up until now we have discussed the diameter of faces of $\CIM_n$, but \cite{LRS20} also gives us edges that are not in $\CIM_G$ for any $G$. 
If $\GG$ is a DAG with $i$ and $j$ not adjacent in the skeleton of $\GG$. 
Then we denote with $\GG_{+i\leftarrow j}$ the directed graph identical to $\GG$ with the edge $i\leftarrow j$ added. 
Notice that $\GG$ and $\GG_{+i\leftarrow j}$ have different skeleton and hence are never Markov equivalent. 
\begin{theorem}\cite{LRS20}
\label{thm: edge pair}
Let $\GG$ be a DAG and $i$ and $j$ be non adjacent vertices. If $\GG_{+i\leftarrow j}$ is a DAG, then $\conv(c_\GG, c_{\GG_{+i\leftarrow j}})$ is an edge of $\CIM_n$.  
\end{theorem}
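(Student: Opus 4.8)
The plan is to certify the edge directly, using the standard fact that for a polytope with vertex set $V$ and two distinct vertices $v,v'$, the segment $\conv(v,v')$ is an edge if and only if it is disjoint from $\conv(V\setminus\{v,v'\})$; so I would verify that $\conv(c_\GG,c_{\GG_{+i\leftarrow j}})$ is disjoint from $\conv(V\setminus\{c_\GG,c_{\GG_{+i\leftarrow j}}\})$. (Note that the hypothesis that $\GG_{+i\leftarrow j}$ is a DAG is exactly what makes $c_{\GG_{+i\leftarrow j}}$ a vertex of $\CIM_n$.) The first step is to locate the coordinates on which the two imsets differ. Since $i$ and $j$ are non-adjacent in $\GG$, passing from $\GG$ to $\GG_{+i\leftarrow j}$ only adjoins $j$ to $\pa(i)$, so $c_{\GG_{+i\leftarrow j}}\ge c_\GG$ coordinatewise, and inspecting the definition of the characteristic imset shows the two differ exactly on the interval
\[
\DD \;=\; \bigl\{\,\{i,j\}\cup A \;:\; A\subseteq\pa_\GG(i)\,\bigr\},
\]
on which $c_\GG$ is identically $0$ and $c_{\GG_{+i\leftarrow j}}$ identically $1$; that $c_\GG$ vanishes on $\DD$ holds because, by acyclicity of $\GG$, none of $i$, $j$, or a parent of $i$ can witness a set of the form $\{i,j\}\cup A$.

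By \cref{lem: lindner} a characteristic imset is determined by its restriction to sets of size $2$ and $3$, so it suffices to work within those coordinates, where $\DD$ shrinks to $\DD'\coloneqq\{\{i,j\}\}\cup\{\{i,j,x\}:x\in\pa_\GG(i)\}$. Suppose some point $z=(1-t)c_\GG+t\,c_{\GG_{+i\leftarrow j}}$ of the segment satisfies $z=\sum_k\lambda_k c_{\HH_k}$, a convex combination of vertices with every $c_{\HH_k}\notin\{c_\GG,c_{\GG_{+i\leftarrow j}}\}$. On every size-$2$ or $-3$ coordinate outside $\DD'$ the vectors $c_\GG$ and $c_{\GG_{+i\leftarrow j}}$ agree at a common value in $\{0,1\}$, so each $c_{\HH_k}$ agrees with $c_\GG$ there; call a vertex with this agreement property a \emph{residual}. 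By \cref{lem: lindner} the restriction of a residual to $\DD'$ is neither identically $0$ (which would force it to equal $c_\GG$) nor identically $1$ (which would force it to equal $c_{\GG_{+i\leftarrow j}}$). Consequently it suffices to exhibit a linear functional $\ell$ supported on the coordinates $\DD'$ whose entries sum to $0$ — so that $\ell$ vanishes identically along the segment — and which is strictly negative at every residual: then $0=\ell(z)=\sum_k\lambda_k\ell(c_{\HH_k})<0$, a contradiction, and $\conv(c_\GG,c_{\GG_{+i\leftarrow j}})$ is an edge. (If $\pa_\GG(i)=\emptyset$ there are no residuals and we are already done, so assume $\pa_\GG(i)\neq\emptyset$.)

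The crux is therefore to describe which $0/1$-patterns on $\DD'$ a residual can realise and to build $\ell$ accordingly. Here the relevant structural inputs, obtained from \cref{lem: imset structure} and \cref{thm: verma pearl}, are: a residual has either the skeleton of $\GG$ or that skeleton together with the edge $i-j$; if a residual contains the edge $i-j$ and $x\in\pa_\GG(i)$ is a neighbour of $j$ in the skeleton, then $\{i,j,x\}$ spans a triangle in the residual's skeleton and hence its $\{i,j,x\}$-coordinate equals $1$ automatically; and a residual with the skeleton of $\GG$ must reverse the orientation of some edge $x-i$ with $x\in\pa_\GG(i)$, a reversal that propagates to the v-structures at $i$ and at $x$ and so is heavily constrained. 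These observations pin the residual patterns down enough that one can take $\ell$ to weight the coordinate $\{i,j,x\}$ by $-1$ when $x$ is a neighbour of $j$ in the skeleton and by $+1$ otherwise, with the coordinate $\{i,j\}$ weighted to make the entries sum to zero, and then check that $\ell$ is strictly negative on all residuals. Making this classification and the accompanying verification airtight — the cases split according to $\lvert\pa_\GG(i)\rvert$ and to which parents of $i$ are neighbours of $j$, and acyclicity must be tracked throughout — is the step I expect to be the main obstacle; the remainder is routine.
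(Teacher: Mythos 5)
This theorem is quoted from \cite{LRS20}; the paper itself gives no proof of it, so the only in-paper point of comparison is the proof of \cref{prop: realizing a set}, which generalises the statement and certifies the edge by exhibiting a linear functional maximised exactly at the two vertices in question. Your framework is a legitimate variant of that technique: the difference set $\DD$ is identified correctly (and your acyclicity argument for $c_\GG$ vanishing on $\DD$ is right), the reduction to sets of size $2$ and $3$ via \cref{lem: lindner} is valid, the ``edge iff the segment avoids the hull of the remaining vertices'' criterion is the standard one, and the functional $\ell$ you propose does in fact work.

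The gap is that you stop exactly at the decisive step --- checking that $\ell$ is strictly negative on every residual --- declaring it ``the main obstacle,'' and the route you sketch toward it (classifying which DAGs can arise as residuals, propagating v-structure reversals, ``tracking acyclicity throughout'') is both unnecessary and liable to balloon. You never need to know which residuals are realisable, only to bound $\ell$ on anything satisfying the residual constraints, and the restriction of a residual to $\DD'$ is forced by elementary observations. Write $A=\{x\in\pa_\GG(i):x\text{ adjacent to }j\text{ in }G\}$ and $B=\pa_\GG(i)\setminus A$, so that $\ell$ assigns $-1$ to $\{i,j,x\}$ for $x\in A$, $+1$ for $x\in B$, and $|A|-|B|$ to $\{i,j\}$. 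If a residual's skeleton lacks the edge $i-j$, then $\{i,j,x\}$ induces a disconnected graph for $x\in B$ (value $0$), while for $x\in A$ the only possible witness is $x$ itself; hence $\ell$ evaluates to minus the number of $x\in A$ at which $i$ and $j$ are both parents, which is strictly negative because an all-zero restriction to $\DD'$ would force the residual to equal $c_\GG$. If the skeleton contains $i-j$, then $\{i,j,x\}$ induces a triangle for $x\in A$ (value $1$ automatically) and a path $x-i-j$ for $x\in B$ (value $1$ iff $i$ is a collider there), so $\ell$ evaluates to $(|A|-|B|)-|A|+|B'|=|B'|-|B|$ where $B'\subseteq B$ records the colliders at $i$; and $B'=B$ is impossible because it would make the residual identically $1$ on $\DD'$ and hence equal to $c_{\GG_{+i\leftarrow j}}$. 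The argument therefore closes in a few lines with no case analysis on $|\pa_\GG(i)|$ and no acyclicity bookkeeping; as submitted, however, the proof is incomplete, because this verification is the entire content of the theorem and is left undone.
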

Applying the above theorem directly gives us an upper bound of the diameter of $2\binom{n}{2}$. 
Indeed, from the empty graph we can walk to any DAG via adding in the correct edges one-by-on, and this requires at most $\binom{n}{2}$ number of steps. 
However, we can show a better bound utilising a new type of edge. 

\begin{proposition}
\label{prop: realizing a set}
Let $i\in [n]$, $S^\ast\subseteq [n]\setminus\{i\}$ and let $\GG$ be a DAG such that $\GG|_{S^\ast}$ is the empty graph. 
Let $\HH$ be the graph identical to $\GG$ but with all the edges $j\to i$ for $j\in S^\ast$. 
Then if $\HH$ is a DAG, $\conv(c_\GG,c_\HH)$ is an edge of $\CIM_n$. 
\end{proposition}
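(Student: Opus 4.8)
The plan is to exhibit a single linear functional $w$ on $\mathbb{R}^{\{S\subseteq[n]\,:\,|S|\ge 2\}}$ whose maximum over $\CIM_n$ equals $\langle w,c_\GG\rangle=\langle w,c_\HH\rangle$ and is attained only at these two vertices (more precisely, only at vertices Markov equivalent to $\GG$ or to $\HH$). The maximizing face $F=\{x\in\CIM_n:\langle w,x\rangle=\langle w,c_\GG\rangle\}$ then has vertex set exactly $\{c_\GG,c_\HH\}$, so $F=\conv(c_\GG,c_\HH)$, which is one–dimensional since $c_\GG\neq c_\HH$; hence it is an edge. First I would record the reductions. If some $j\in S^\ast$ already has $j\to i$ in $\GG$ we may delete it from $S^\ast$ without changing $\GG$ or $\HH$; if $i\to j$ in $\GG$ for some $j\in S^\ast$ then $\HH$ has a $2$-cycle, contradicting the hypothesis that $\HH$ is a DAG; and if $S^\ast\subseteq\pa_\GG(i)$ then $\HH=\GG$ and there is nothing to prove. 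So we may assume $S^\ast\neq\emptyset$ and $S^\ast\cap\cl_\GG(i)=\emptyset$. Writing $A\coloneqq\pa_\GG(i)$, one checks directly from the definition of the characteristic imset that $c_\HH\ge c_\GG$ coordinatewise and that the coordinates on which they differ form the set
\[
\mathcal{K}=\bigl\{\,\{i\}\cup U \;:\; \emptyset\neq U\subseteq A\cup S^\ast,\ U\cap S^\ast\neq\emptyset\,\bigr\},
\]
each such coordinate being $0$ in $c_\GG$ and $1$ in $c_\HH$.

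Next I would assemble $w$ from three blocks: $w_S=-M$ whenever $c_\HH(S)=0$; $w_S=R$ whenever $S\notin\mathcal{K}$ and $c_\HH(S)=1$ (equivalently $c_\GG(S)=1$); and $w_S=\varepsilon_S$ for $S\in\mathcal{K}$, where the $\varepsilon_S$ are reals to be chosen with $\sum_{S\in\mathcal{K}}\varepsilon_S=0$. The balance condition gives immediately $\langle w,c_\GG\rangle=\langle w,c_\HH\rangle=R\,N$ with $N=\bigl|\{S\notin\mathcal{K}:c_\HH(S)=1\}\bigr|$. Choosing $M$ and $R$ large compared with $\sum_{S\in\mathcal{K}}|\varepsilon_S|$, any DAG $\DD$ with $\langle w,c_\DD\rangle\ge RN$ is forced to have $c_\DD(S)=0$ wherever $c_\HH(S)=0$ and $c_\DD(S)=1$ for every $S\notin\mathcal{K}$ with $c_\HH(S)=1$; in other words $c_\DD$ agrees with $c_\HH$ off $\mathcal{K}$ and satisfies $c_\DD\le c_\HH$ on $\mathcal{K}$. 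For such $\DD$ we get $\langle w,c_\DD\rangle=RN+\sum_{S\in\mathcal{K},\,c_\DD(S)=1}\varepsilon_S$, so the problem collapses to understanding the possible ``on-sets'' $\mathcal{U}(\DD)=\{S\in\mathcal{K}:c_\DD(S)=1\}$ and choosing $\varepsilon$ so that $\sum_{S\in\mathcal{U}(\DD)}\varepsilon_S\le 0$, with equality only for $\mathcal{U}(\DD)=\emptyset$ or $\mathcal{U}(\DD)=\mathcal{K}$. By \cref{lem: studeny} these two extreme on-sets correspond precisely to $c_\DD=c_\GG$ and $c_\DD=c_\HH$, so this suffices.

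The core of the argument is the classification of the on-sets. I would use \cref{lem: lindner} to reduce to coordinates of size $2$ and $3$: since pairs inside $S^\ast$ lie outside $\mathcal{K}$, the constraint forces $\DD|_{S^\ast}$ to be edgeless, and together with $c_\DD\le c_\HH$ on $\mathcal{K}$ and agreement off $\mathcal{K}$ this pins $\DD$ down to the skeleton of $\GG$ plus some new incidences $i-j$ with $j$ ranging over a subset $S'\subseteq S^\ast$, oriented so as to form a DAG and to leave the $v$-structures of $\GG$ not involving the new incidences intact (any reorientation of the $i$–$A$ edges that is compatible with $\mathcal{K}$ must stay inside the Markov equivalence class of $\GG$). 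Then I would show that if $\DD$ is not Markov equivalent to $\GG$ or to $\HH$ — i.e. $S'\neq\emptyset$ and either $S'\subsetneq S^\ast$ or some new edge points out of $i$ — the on-set $\mathcal{U}(\DD)$ necessarily omits a controlled family of size-$3$ and larger coordinates of $\mathcal{K}$ (for instance all $\{i\}\cup U$ with $U$ meeting $S^\ast\setminus P$, where $P$ is the set of new in-parents of $i$). Finally I would choose $\varepsilon$ in the spirit of an inclusion–exclusion weighting: a large positive weight $\delta$ on the ``high'' coordinates $\{i\}\cup U$ with $|U\cap S^\ast|\ge2$, with compensating negative weights on the pairs $\{i,j\}$, $j\in S^\ast$, and the mixed triples $\{i,j,a\}$, $a\in A$, so that $\sum_{\mathcal{K}}\varepsilon_S=0$, and then verify — using, e.g., the monotonicity of $s\mapsto(2^{s}-1-s)/s$ — that every admissible proper non-empty on-set has strictly negative $\varepsilon$-sum.

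I expect the main obstacle to be precisely this last combinatorial step: giving a clean description of which subsets of $\mathcal{K}$ actually occur as on-sets of DAGs in the reduced family (in particular controlling the interaction of $S^\ast$ with the old parent set $A$ and the allowable reorientations of the $i$–$A$ edges), and then exhibiting explicit weights $\varepsilon_S$ that are simultaneously balanced and strictly separating on all of them. Everything else — the reductions, the $-M$/$R$ block structure, the passage to coordinates of size at most $3$, and the final face-dimension argument — should be routine.
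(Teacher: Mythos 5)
Your overall architecture is the same as the paper's: a certifying linear functional with a large penalty on coordinates where $c_\HH$ vanishes, a large reward on coordinates where $c_\GG$ equals one, and a zero-sum weighting on the difference set $\mathcal{K}$, followed by a classification of which subsets of $\mathcal{K}$ can occur as the on-set of a maximizer. Your reductions, your description of $\mathcal{K}$, and the passage to coordinates of size $2$ and $3$ via \cref{lem: lindner} are all correct. The genuine gap is that the step you yourself flag as the main obstacle---describing the achievable on-sets and exhibiting balanced weights $\varepsilon_S$ whose sum is strictly negative on every proper nonempty one---is where the entire content of the proposition lives, and it is not carried out. Moreover, the partial classification you offer is incomplete: you claim a maximizer $\DD$ not Markov equivalent to $\GG$ or $\HH$ must omit all $\{i\}\cup U$ with $U$ meeting $S^\ast\setminus P$, but a maximizer can have $P=S^\ast$ (every new edge pointing into $i$) and still fail to be Markov equivalent to $\HH$ by reorienting an old edge $a-i$ with $a\in\pa_\GG(i)$ as $i\to a$. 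When $a$ is not adjacent to $S^\ast$ this disturbs no coordinate outside $\mathcal{K}$, keeps every pair $\{i,j\}$ and every pure coordinate $\{i\}\cup U$ with $U\subseteq S^\ast$ switched on, yet switches off every mixed coordinate $\{i\}\cup U$ with $a\in U$. Any admissible $\varepsilon$ must give this on-set (and its analogue for each subset of $\pa_\GG(i)$) a strictly negative total, and your proposed inclusion--exclusion weighting is not shown to do so; whether the required system of balance and strict-separation constraints is even satisfiable is exactly the open part of your argument.

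For comparison, the paper's proof concentrates the entire positive weight $|S^\ast|$ on the single coordinate $S^\ast\cup\{i\}$, puts $-1$ on each new pair $\{i,j\}$ for $j\in S^\ast$, and $0$ on the rest of $\mathcal{K}$. This collapses the on-set analysis to one question---whether $c_\DD(S^\ast\cup\{i\})$ is $0$ or $1$---answered by the common-child characterization: since $\DD|_{S^\ast}$ is edgeless and $|S^\ast|\ge 2$, the common child must be $i$, forcing all edges $j\to i$. Note, though, that even there the zero-weighted mixed coordinates $\{i,j,a\}$ with $a\in\pa_\GG(i)$ must still be verified to agree with $c_\HH$ (the paper's ``left to check'' sentence), and the reorientation phenomenon above is precisely what makes that verification delicate. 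So the obstacle you identified is real and is not a routine afterthought; a complete proof along either weighting must confront it explicitly rather than defer it.
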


\begin{proof}
Notice that since $\GG\subseteq\HH$ we must have that $c_\HH(S)-c_\GG(S)\geq 0$ for every $S$.
Moreover, if $|S^\ast| = 1 $ then the result follows by \cref{thm: edge pair}, thus we can assume that $|S^\ast|\geq 2$. 
We let 
\[
w(S)=\begin{cases}
n^2 &\text{if } c_\GG(S)=1,\\
-n^2 &\text{if } c_\HH(S)=0,\\
-1 &\text{if } |S|=2, c_\GG(S)=0, c_\HH(S)=1,\\
|S^\ast| &\text{if } S=S^\ast\cup \{i\}, \text{ and}\\
0 & \text{otherwise.}
\end{cases}
\]
Let $\DD$ be a DAG maximising $w^Tc_\DD$. 
Let $D$, $G$, and $H$ denote the skeleton of $\DD$, $\GG$, and $\HH$ respectively.
We begin by noticing that $|S^\ast|\leq n-1$ and hence we must have $c_\DD(S)=1$ for all $S$ such that $c_\GG(S)=1$ and $c_\DD(S)=1$ for all $S$ such that $c_\HH(S)=0$. 
As $G\subseteq H$ this with \cref{lem: lindner} gives us $G\subseteq D\subseteq H$. 

It is straightforward from the definition of the characteristic imset that $c_\GG(S)=1$ implies $c_\HH(S)=1$, and the other way around $c_\HH(S)=0$ implies $c_\GG(S)=0$. 
Then we have two cases.

If $c_\DD(S^\ast\cup \{i\})=0$ we must have $c_\DD(\{i,j\})=0$ for all $j\in S^\ast$ as otherwise $w^Tc_\DD\leq w^Tc_\GG-1<w^Tc_\GG$. 
Then we notice that $c_\GG$ and $c_\HH$ only differ in 3-sets of the from $\{i,j,k\}$ where $j\in S^\ast$ and $k\in\pa_\GG(i)\cup S^\ast$.  
For every such 3-set $S$ we have that $\DD|_S$ is not connected and hence $c_\DD(S)=0$. 
Hence $c_\DD$ agrees with $c_\GG$ on all sets of size $2$ and $3$ and by \Cref{lem: lindner} it follows that $c_\DD=c_\GG$. 
 If $c_\DD(S^\ast\cup \{i\})=1$ we must have a node $t\in S^\ast\cup \{i\}$ that is the child of everyone else. 
However $\DD|_{S^\ast}$ has no edges as neither $\GG|_{S^\ast}$ nor $\HH|_{S^\ast}$ has any edges, hence, as $|S^\ast|\geq 2$ we must have $t=i$. 
 Hence we must have all edges $j\to i$, for $j\in S^\ast$, in $\DD$. 
Left to check is that $c_\DD$ agrees with $c_\HH$ for all 3-sets on the form discussed above, that is $c_\DD(\{i,j,k\})=c_\HH(\{i,j,k\})=1$ for all $j\in S^\ast$ and $k\in\pa_\GG(i)\cup S^\ast$.  
\end{proof}
 
From this we get a linear upper bound of $\CIM_n$. 

\begin{proposition}
\label{prop: diameter of cim polytope}
The diameter of $\CIM_n$ is less than or equal to $2n-2$. 
\end{proposition}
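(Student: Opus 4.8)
The plan is to single out one ``hub'' DAG, show that every characteristic imset lies within a short walk of the hub in the vertex--edge graph $G(\CIM_n)$, and then conclude by the triangle inequality for graph distance. I would take the hub to be the empty graph $\GG_\emptyset$ on $[n]$, and the crux of the argument is the claim that $\operatorname{dist}(c_\GG, c_{\GG_\emptyset}) \le n-1$ in $G(\CIM_n)$ for \emph{every} DAG $\GG$. Granting this, any two DAGs $\GG$ and $\HH$ satisfy
\[
\operatorname{dist}(c_\GG, c_\HH) \;\le\; \operatorname{dist}(c_\GG, c_{\GG_\emptyset}) + \operatorname{dist}(c_{\GG_\emptyset}, c_\HH) \;\le\; (n-1)+(n-1) \;=\; 2n-2,
\]
which is exactly the asserted bound on $\diam(\CIM_n)$.

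To prove the claim I would assemble $\GG$ from $\GG_\emptyset$ one vertex at a time, installing at each stage a whole fan of in-edges by a single application of \Cref{prop: realizing a set}. Fix a topological ordering $v_1 < v_2 < \dots < v_n$ of $\GG$, so that every edge of $\GG$ runs from a smaller to a larger index. Put $\GG_{n+1} = \GG_\emptyset$, and for $k = n, n-1, \dots, 2$ let $\GG_k$ be obtained from $\GG_{k+1}$ by adding all the edges $v_a \to v_k$ with $v_a \in \pa_\GG(v_k)$, skipping the step entirely whenever $\pa_\GG(v_k) = \emptyset$. Then $\GG_2 = \GG$, and since $v_1$ is a source there are at most $n-1$ nontrivial steps in all, one for each of $v_2, \dots, v_n$.

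It remains to verify that each step is an edge of $\CIM_n$ via \Cref{prop: realizing a set} (with its roles $\GG$, $\HH$, $i$, $S^\ast$ played here by $\GG_{k+1}$, $\GG_k$, $v_k$, and $S^\ast = \pa_\GG(v_k) \subseteq \{v_1, \dots, v_{k-1}\}$, respectively). By construction every edge added before the $v_k$-step points into one of the already-processed vertices $v_{k+1}, \dots, v_n$; hence $\GG_{k+1}$ has no edge at all incident to $S^\ast$, so in particular $\GG_{k+1}|_{S^\ast}$ is the empty graph. Moreover each $\GG_k$ is a subgraph of $\GG$ and therefore acyclic, so $\GG_k$ is a DAG. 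Thus the hypotheses of \Cref{prop: realizing a set} are met, $\conv(c_{\GG_{k+1}}, c_{\GG_k})$ is an edge of $\CIM_n$, and the two endpoints are distinct vertices because adding an edge changes the skeleton and hence a $2$-coordinate of the imset, by \Cref{lem: imset structure}. Concatenating these steps yields a walk of length at most $n-1$ from $c_\GG$ to $c_{\GG_\emptyset}$, proving the claim and hence the proposition.

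The only delicate point — and the step I expect to be the main (if modest) obstacle — is choosing the order in which the fans are installed so that the hypothesis ``$\GG|_{S^\ast}$ is the empty graph'' of \Cref{prop: realizing a set} holds throughout and so that acyclicity is never destroyed; processing the vertices in \emph{reverse} topological order is precisely what makes each prospective parent set an isolated set at the moment its fan is attached. One could alternatively start the walk at a transitive tournament rather than at $\GG_\emptyset$, but the empty graph meshes most naturally with \Cref{prop: realizing a set}, which only ever adds edges.
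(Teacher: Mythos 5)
Your proof is correct and takes essentially the same route as the paper: fix the empty graph as a hub, build $\GG$ from it by installing each vertex's fan of in-edges in reverse topological order, certify each step as an edge of $\CIM_n$ via \cref{prop: realizing a set}, and double the resulting bound of $n-1$ by the triangle inequality. One small overstatement to fix: $\GG_{k+1}$ may well have edges incident to $S^\ast$ (a vertex of $S^\ast$ can be the tail of an already-installed edge into some $v_j$ with $j>k$); what is true, and all that \cref{prop: realizing a set} requires, is that no edge of $\GG_{k+1}$ has \emph{both} endpoints in $S^\ast$, which holds because every edge head lies in $\{v_{k+1},\dots,v_n\}$, disjoint from $S^\ast\subseteq\{v_1,\dots,v_{k-1}\}$.
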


\begin{proof}
Let $\GG$ be any given DAG on $n$ nodes and let $v_1, \dots, v_n$ be a topological order of the vertices of $\GG$. 
Equivalently, if $v_i$ is a parent of $v_j$ then $i< j$. 
Let $\GG_n$ be the DAG with no edges. 
Define $\GG_{k-1}$ recursively to be $\GG_k$ with all edges $v\to k$ for $v\in\pa_\GG(k)$. 
Then, as $\pa_\GG(v_1)=\emptyset$, we must have that $\GG_2=\GG$. 
All that is left to show is that $\conv(\GG_{k-1}, \GG_k)$ is an edge of $\CIM_n$ for all $k$. 
This follows by \Cref{prop: realizing a set} and our observation regarding parents of $v_i$. 
Hence the distance from any vertex $c_\GG$ of $\CIM_n$ to the specific vertex $c_{\GG_n}$ is at most $n-1$, and the total diameter is at most twice that.
\end{proof}

It can be checked via \texttt{polymake} \cite{AGHJLPR17, GJ00} that $\diam\CIM_n=n-1$ for $n\in\{1,2,3,4\}$ and all these distances are realised by the graph distance between the DAG with no edges and the complete graph. 
This seems reasonable in light of the following proposition. 

\begin{proposition}
\label{prop: not edge of cim}
Let $\GG$ be a DAG, let $i_1,\dots,i_k$ be vertices and $S_1, \dots S_k$ be sets such that $\cl_\GG(i_t)\cap S_t=\emptyset$ for all $1\leq t\leq k$.
Let $\HH$ be obtained from $\GG$ via adding in all edges $s\to i_t$ for $s\in S_t$. 
If $\HH$ is a DAG and $k\geq 2$, then $\conv(c_\GG, c_\HH)$ is not an edge of $\CIM_n$.
\end{proposition}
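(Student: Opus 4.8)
The strategy I would use is to show that $\sigma\coloneqq\conv(c_\GG,c_\HH)$ is not even a \emph{face} of $\CIM_n$. For this it suffices to find two DAGs $\DD_1,\DD_2$ with $c_{\DD_1},c_{\DD_2}\notin\{c_\GG,c_\HH\}$ and $c_{\DD_1}+c_{\DD_2}=c_\GG+c_\HH$: then the midpoint $\tfrac12(c_\GG+c_\HH)$ of $\sigma$ is a convex combination of the vertices $c_{\DD_1},c_{\DD_2}$ of $\CIM_n$, and since neither of those lies on $\sigma$ (the only vertices of $\CIM_n$ on a segment between two vertices are its endpoints) while a face must contain every vertex occurring in a convex representation of any of its points, $\sigma$ is not a face. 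Throughout I take the $i_t$ pairwise distinct and, after discarding indices with $S_t=\emptyset$, each $S_t$ non-empty, so that the $k\ge2$ operations $\GG\mapsto\GG\cup\{s\to i_t:s\in S_t\}$ genuinely enlarge the parent set of $k$ distinct ``sinks''.

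The combinatorial input is the observation that whenever $\DD$ is a DAG and $c_\DD(S)=1$, the subgraph $\DD|_S$ has a \emph{unique} sink, i.e.\ a unique $j\in S$ with $S\setminus\{j\}\subseteq\pa_\DD(j)$ (two such would give a $2$-cycle). Applying this to $\HH$: if $c_\GG(S)=0<1=c_\HH(S)$ and $j$ is the sink of $\HH|_S$, then $j$ must be one of the $i_t$, since the operations change only the parent sets $\pa(i_t)$ and otherwise $\pa_\HH(j)=\pa_\GG(j)$ would force $c_\GG(S)=1$; by distinctness this $t$ is unique, and because $\cl_\GG(i_t)\cap S_t=\emptyset$ gives $S_t\cap\pa_\GG(i_t)=\emptyset$, we must have $S\cap S_t\neq\emptyset$ (else $S\setminus\{i_t\}\subseteq\pa_\GG(i_t)$ and again $c_\GG(S)=1$). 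Thus every coordinate where $c_\GG$ and $c_\HH$ differ is unambiguously charged to one operation $t=t(S)$.

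Next I would split the operations: let $\DD_1\coloneqq\GG\cup\{s\to i_1:s\in S_1\}$ and $\DD_2\coloneqq\GG\cup\{s\to i_t: t\ge 2,\ s\in S_t\}$; both are subgraphs of the DAG $\HH$, hence DAGs, and $\GG\subseteq\DD_1,\DD_2\subseteq\HH$. The main claim is $c_{\DD_1}(S)+c_{\DD_2}(S)=c_\GG(S)+c_\HH(S)$ for all $S$. On sets with $c_\GG(S)=c_\HH(S)$ this follows from monotonicity of the characteristic imset under edge addition. On a set with $c_\GG(S)=0<1=c_\HH(S)$, write $t=t(S)$ and let $i_t$ be the sink of $\HH|_S$. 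Whichever of $\DD_1,\DD_2$ carries operation $t$ has $\pa(i_t)=\pa_\HH(i_t)\supseteq S\setminus\{i_t\}$, so $c(S)=1$ there; the other graph has $\pa(i_t)=\pa_\GG(i_t)$, which by $S\cap S_t\neq\emptyset$ does not contain $S\setminus\{i_t\}$, so $i_t$ is not a sink there, and no other vertex is either — a sink of $\DD_m|_S$ would, via $\DD_m\subseteq\HH$, be a sink of $\HH|_S$, contradicting uniqueness — whence $c(S)=0$. So exactly one of $c_{\DD_1}(S),c_{\DD_2}(S)$ is $1$, as needed.

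It remains to see $c_{\DD_1},c_{\DD_2}\notin\{c_\GG,c_\HH\}$: any $s\in S_1$ separates $c_{\DD_1}$ from $c_\GG$ on the coordinate $\{s,i_1\}$ (where they are $1$ and $0$, since $s\notin\cl_\GG(i_1)$), and any $s\in S_2$ separates $c_{\DD_1}$ from $c_\HH$ on $\{s,i_2\}$; symmetrically for $c_{\DD_2}$, using $S_2$ against $\GG$ and $S_1$ against $\HH$. By the first paragraph this shows $\sigma$ is not a face of $\CIM_n$, so in particular not an edge. I expect the delicate part to be the coordinatewise identity on the sets where $c_\GG$ and $c_\HH$ disagree: one has to be sure that passing to $\DD_1$ or $\DD_2$ never creates a sink on $S$ that was absent in $\HH$, and that the charging $S\mapsto t(S)$ is single-valued — both of which hinge on uniqueness of sinks together with the $i_t$ being distinct.
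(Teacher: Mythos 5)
Your proposal is correct and follows essentially the same route as the paper: you split the $k$ operations into the same two DAGs $\DD_1,\DD_2$ (the paper's $\GG_1,\GG_2$), establish the parallelogram identity $c_\GG+c_\HH=c_{\DD_1}+c_{\DD_2}$, and conclude via the standard fact that a segment whose midpoint is a convex combination of two other vertices cannot be a face. The only (harmless) differences are that you verify the identity on all coordinates directly via the unique-sink observation rather than reducing to sets of size $2$ and $3$ via \cref{lem: lindner}, and that you prove the polytope fact (\cref{lem: square not edge}) rather than citing it.
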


To show this we will make use of the following lemma which is a fundamental fact from polytope theory.

\begin{lemma}
\label{lem: square not edge}
Let $P$ be a polytope and let $v$ be a vertex of $P$. 
If there exists non-zero vectors $u_1$ and $u_2$ such that $v+u_1$, $v+u_2$, and $v+u_1+u_2$ are all vertices of $P$, then $\conv(v, v+u_1+u_2)$ is not an edge of $P$. 
\end{lemma}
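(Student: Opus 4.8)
The plan is to exploit the fact that the four points $v$, $v+u_1$, $v+u_2$, $v+u_1+u_2$ form a (possibly degenerate) parallelogram, and that the two diagonals of a parallelogram bisect each other. Concretely, set $m \coloneqq v + \tfrac12(u_1+u_2)$. Then $m$ is simultaneously the midpoint of the segment $\conv(v, v+u_1+u_2)$ and the midpoint of the segment $\conv(v+u_1, v+u_2)$, since $\tfrac12 v + \tfrac12(v+u_1+u_2) = \tfrac12(v+u_1)+\tfrac12(v+u_2) = m$. This crossing of the two diagonals at a common interior point is precisely the obstruction to the diagonal $\conv(v, v+u_1+u_2)$ being an edge.

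First I would dispose of the degenerate case $u_1+u_2 = 0$: here $v+u_1+u_2 = v$, so $\conv(v, v+u_1+u_2) = \{v\}$ is a single point (a $0$-face), which is not an edge, and the conclusion holds trivially. Hence I may assume $u_1 + u_2 \neq 0$, so that $E \coloneqq \conv(v, v+u_1+u_2)$ is a genuine one-dimensional segment and $m$ lies in its relative interior.

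Now suppose for contradiction that $E$ is an edge, hence a face, of $P$. I would invoke the defining property of a face: if a point of $E$ is written as a convex combination with strictly positive coefficients of points of $P$, then all of those points lie in $E$. Applying this to $m \in E$ and the representation $m = \tfrac12(v+u_1) + \tfrac12(v+u_2)$, with $v+u_1, v+u_2 \in P$, forces $v+u_1 \in E$ and $v+u_2 \in E$. Since $v+u_1$ is a vertex of $P$ lying in the face $E$, it is a vertex of $E$; but $E$ has exactly the two vertices $v$ and $v+u_1+u_2$, so $v+u_1 \in \{v, v+u_1+u_2\}$. The first alternative gives $u_1 = 0$ and the second gives $u_2 = 0$, each contradicting the hypothesis that $u_1$ and $u_2$ are non-zero.

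The argument is short, and the step I would treat most carefully — the only real subtlety — is the passage from \textquotedblleft$v+u_1$ is a vertex of $P$ contained in the face $E$\textquotedblright{} to \textquotedblleft$v+u_1$ is a vertex of $E$\textquotedblright, together with the correct invocation of the face-inheritance property for the midpoint $m$. The remaining point to verify is the bookkeeping of which degenerate configurations can arise; in particular the case $u_1 = u_2$ (where $v+u_1 = v+u_2 = m$) is handled uniformly by the same argument, since the representation of $m$ still forces $v+u_1 \in E$.
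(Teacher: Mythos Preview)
Your argument is correct. The paper does not actually prove this lemma; it simply states it as ``a fundamental fact from polytope theory'' and moves on. Your midpoint argument --- writing $m = \tfrac12 v + \tfrac12(v+u_1+u_2) = \tfrac12(v+u_1) + \tfrac12(v+u_2)$ and using that a face must contain every point appearing with positive coefficient in a convex combination of one of its interior points --- is exactly the standard proof one would supply, and your handling of the degenerate cases ($u_1+u_2=0$, $u_1=u_2$) is clean.
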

\begin{proof}[Proof of \cref{prop: not edge of cim}]
Let $\GG_1$ be that DAG that is obtained from $\GG$ via adding in all edges $s\to i_1$ where $s\in S_1$. 
Let $\GG_2$ be that DAG that is obtained from $\GG$ via adding in all edges $s\to i_t$ where $s\in S_t$ for $1<t\leq k$. 
As all of $\GG$, $\HH$, $\GG_1$, and $\GG_2$ have different skeleton, none are Markov equivalent. 
Thus the result follows from \cref{lem: square not edge} we we can show that $c_\GG+c_\HH=c_{\GG_1}+c_{\GG_2}$. 
By \cref{lem: lindner} it is enough to show $c_\GG(S)+c_\HH(S)=c_{\GG_1}(S)+c_{\GG_2}(S)$ for all sets $S$ such that $|S|\in \{2,3\}$. 
This equality follows directly for all 2-sets from \cref{lem: lindner}, as they encode the skeletons of the graphs. 
If $S$ is not a 3-set such that $i_t\in S$ and $S\cap S_t\neq\emptyset$ for some $t$, then $\GG|_S=\HH|_S=\GG_1|_S=\GG_2|_S$ and hence the equality holds. 
Thus we can assume that $S=\{i_t, s_t, p_t\}$ where $s_t\in S_t$ and $p_t\in S_t\cup\pa_\GG(i_t)$. 
The rest follows by definition of the characteristic imset and the construction of $\GG_1$ and $\GG_2$.  
\end{proof}
Hence adding in parents to several vertices at the same time is, in some sense, hard.
Therefore it would be reasonable that the distance between the empty and the complete graph will be $n-1$ for every $n$. 
The construction in the proof of \cref{prop: diameter of cim polytope} is however not always optimal, even if one graph is the empty graph.  

\begin{example}
Consider the graphs $\GG=([5],\emptyset)$ and $\HH=([5], E)$ where $\GG$ is the empty graph and $\HH$ is a star where the middle vertex has exactly $2$ parents. 
Then the construction in the proof of \cref{prop: diameter of cim polytope} gives a path in $G(\CIM_n)$ of length $4$. 
However, utilising \cref{prop: realizing a set} we can move to a graph with the correct skeleton and utilizing  \cref{thm: edges of trees characterization}, we can move directly to $\HH$, see \cref{fig: cim diam ex}. 
However, \cref{prop: not edge of cim} shows that $\conv(c_\GG, c_\HH)$ is not an edge of $\CIM_6$ and hence the distance between $\GG$ and $\HH$ is 2.
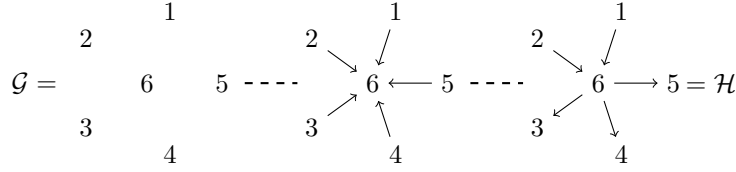
\begin{figure}
\[
\begin{tikzpicture}
\begin{scope}
\node at (-1.5, 0) {$\GG=$};
\node (p6) at (0:0) {$6$};

\foreach \x in {1,2,3,4,5}{
    \node (p\x) at ({\x*72}:1) {${\x}$};
}
\end{scope}
\begin{scope}[shift={(3,0)}]
\node (p6) at (0:0) {$6$};

\foreach \x in {1,2,3,4,5}{
    \node (p\x) at ({\x*72}:1) {$\x$};
    \draw[->] (p\x) -- (p6);
}
\end{scope}
\begin{scope}[shift={(6,0)}]
\node at (1.5, 0) {$=\HH$};
\node (p6) at (0:0) {$6$};
\foreach \x in {1,2,3,4,5}{
    \node (p\x) at ({\x*72}:1) {$\x$};
}
\foreach \from/\to in{1/6, 2/6, 6/3, 6/4, 6/5}{
    \draw[->] (p\from) -- (p\to);
}
\end{scope}
\draw[thick, dashed] (1.3, 0) -- (2, 0)
    (4.3, 0) -- (5, 0);
\end{tikzpicture}
\]
\caption{An example of a short path over $\CIM_n$ that is not used in the proof of \cref{prop: diameter of cim polytope}. Edges of $\CIM_n$ are denoted with dashed lines. }
\label{fig: cim diam ex}
\end{figure}
\end{example}

So far we have shown that we have quadratic bounds on the diameter of the faces $\CIM_G$, and if $G$ is a tree this bound becomes linear, in the number of vertices of $G$. 
We also have a linear bound on the diameter of the whole polytope $\CIM_n$. 
This leads us to believe that we in fact have a linear bound on $\CIM_G$ for any $G$. 

For any DAG $\GG$ with skeleton $G$ and vertex $i$ we let $\GG_{\downarrow i}$ be the graph where we have $k\to i$ for all $k\in \ne_\GG(i)$ and is otherwise identical to $\GG$. 
\begin{lemma}
If $\GG$ is a DAG, then $\GG_{\downarrow i}$ is a DAG for any vertex $i$. 
\end{lemma}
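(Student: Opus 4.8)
The plan is to show that reorienting all edges incident to a fixed vertex $i$ so that they point \emph{into} $i$ cannot create a directed cycle. The key observation is that in $\GG_{\downarrow i}$ the vertex $i$ is a sink: it has no outgoing edges at all, since every neighbour $k\in\ne_\GG(i)$ now satisfies $k\to i$, and no new edges are added. A directed cycle in $\GG_{\downarrow i}$ would therefore have to avoid $i$ entirely, because a cycle through $i$ would need to leave $i$ along some outgoing edge, which does not exist.

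So suppose for contradiction that $\GG_{\downarrow i}$ contains a directed cycle $C$. Then $C$ does not pass through $i$, hence $C$ uses only vertices in $[n]\setminus\{i\}$. Next I would argue that every edge of $C$ is already present, with the same orientation, in $\GG$. Indeed, the only edges on which $\GG$ and $\GG_{\downarrow i}$ differ are those incident to $i$: an edge $i\to k$ of $\GG$ becomes $k\to i$ in $\GG_{\downarrow i}$, and no other edge changes. Since $C$ avoids $i$, none of its edges are incident to $i$, so each edge of $C$ is an edge of $\GG$ with its original orientation. But then $C$ is a directed cycle in $\GG$, contradicting that $\GG$ is a DAG. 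Therefore $\GG_{\downarrow i}$ has no directed cycle, i.e.\ it is a DAG.

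There is essentially no serious obstacle here; the only point requiring a moment's care is the bookkeeping of which edges change under the $\downarrow i$ operation, and in particular confirming that $i$ genuinely becomes a sink (no edge $i\to k$ survives, because any neighbour $k$ has its incident edge redirected to $k\to i$, and non-neighbours are never joined to $i$). Once that is pinned down, the cycle-avoids-$i$ argument and the restriction-to-$\GG$ argument are immediate. I would write the proof in three short sentences along exactly these lines.
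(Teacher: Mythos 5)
Your argument is correct and is essentially the paper's own proof, just phrased in the contrapositive direction: the paper notes that any new cycle would have to use a reversed edge and hence pass through $i$, which is impossible since $i$ is a sink, while you note that a cycle must avoid the sink $i$ and hence lies entirely in $\GG$. Both hinge on the same two observations (only edges incident to $i$ change, and $i$ has no outgoing edges in $\GG_{\downarrow i}$), so no further comment is needed.
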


\begin{proof}
Any new cycles would have to use one of the edges that were reversed and thus pass through $i$, but $i$ has no outgoing edges in $\GG_{\downarrow i}$. 
Hence $\GG_{\downarrow i}$ has no directed cycles. 
\end{proof}

To then show a linear bound on $\diam\CIM_G$ it is enough to show the following conjecture.
\begin{conjecture}
\label{conj: sink conj}
If $\GG$ and $\GG_{\downarrow i}$ are not Markov equivalent, then $\conv\left(c_\GG, c_{\GG_{\downarrow i}}\right)$ is an edge of $\CIM_G$. 
\end{conjecture}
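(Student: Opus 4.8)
The plan is to exhibit an explicit linear objective $w$ that is uniquely maximised over $\CIM_G$ by the two vertices $c_\GG$ and $c_{\GG_{\downarrow i}}$, so that $\conv(c_\GG, c_{\GG_{\downarrow i}})$ is a face, hence (being one-dimensional) an edge. Write $G$ for the common skeleton and $N = \ne_\GG(i)$; note $\GG$ and $\GG_{\downarrow i}$ differ only by reversing some edges incident to $i$, so $c_\GG$ and $c_{\GG_{\downarrow i}}$ agree on every coordinate $S$ with $i \notin S$, and on $\{i,j\}$ for $j \in N$ (these are all $1$). The only coordinates on which they can differ are $3$-sets $\{i, j, k\}$ with $j, k \in \cl_\GG(i)$: such a set has $c(\{i,j,k\}) = 1$ exactly when it forms a v-structure $j \to i \leftarrow k$ or a ``chain through $i$'' $j \to i \to k$, etc. In $\GG_{\downarrow i}$, every $j,k \in N$ gives the v-structure $j \to i \leftarrow k$, so $c_{\GG_{\downarrow i}}(\{i,j,k\}) = 1$ for all such pairs; in $\GG$, some of these are $1$ and some are $0$. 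Since $\GG \not\equiv \GG_{\downarrow i}$, at least one such $3$-set flips.

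The weight vector I would try mimics the one in \cref{prop: realizing a set}: set $w(S) = M$ (a large constant, say $M = n^2$) whenever $c_\GG(S) = 1$, set $w(S) = -M$ whenever $c_{\GG_{\downarrow i}}(S) = 0$, and on the remaining ``free'' coordinates — the $3$-sets $\{i,j,k\}$ with $j,k \in \cl_\GG(i)$ on which $c_\GG = 0$ but $c_{\GG_{\downarrow i}} = 1$ — assign a small positive weight, e.g.\ $w(\{i,j,k\}) = 1$, and $w(S) = 0$ otherwise. Because $\CIM_G$ has a bounded number of coordinates and each is $0/1$, the large-$M$ terms force any maximiser $\DD$ (with skeleton $D$, necessarily equal to $G$) to satisfy $c_\DD(S) = 1$ whenever $c_\GG(S) = 1$ and $c_\DD(S) = 0$ whenever $c_{\GG_{\downarrow i}}(S) = 0$; by \cref{lem: lindner} this already pins down $c_\DD$ on every coordinate except the free $3$-sets. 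The maximiser must then take as many of the free $3$-sets equal to $1$ as the v-structure/adjacency constraints allow.

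The combinatorial heart of the argument is the claim that, for a DAG $\DD$ with skeleton $G$ that already agrees with $\GG$ outside the free coordinates, the free $3$-sets $\{i,j,k\}$ that can \emph{simultaneously} be made $1$ are exactly: (a) all of them, which happens iff every $j \in N$ has $j \to i$ in $\DD$, i.e.\ $\DD = \GG_{\downarrow i}$; or (b) exactly the ones already equal to $1$ in $\GG$, which forces $\DD = \GG$ on those coordinates too, i.e.\ $\DD = \GG$. The point is an ``all-or-nothing'' phenomenon at the vertex $i$: $c_\DD(\{i,j,k\}) = 1$ requires one of $j,k,i$ to be a common child of the other two within $\DD|_{\{i,j,k\}}$, and since $\DD$ and $\GG$ have the same skeleton and differ only in the orientation of edges at $i$, the reachable patterns of these $3$-sets are controlled by which edges at $i$ point into $i$; reversing even one edge $i \to j$ (for $j$ that was pointing away in $\GG$) to $j \to i$ changes the values on the whole ``fan'' of $3$-sets through that $j$ in a monotone way, and one checks that the objective $w$ is optimised only at the two extreme orientations. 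This monotonicity/extremality claim — showing there is no intermediate orientation of the edges at $i$ beating both $c_\GG$ and $c_{\GG_{\downarrow i}}$ — is the step I expect to be the main obstacle, and is presumably why the statement is left as a conjecture rather than a proposition; handling it likely requires a careful case analysis of how a v-structure at $i$ versus at a neighbour $j$ interacts, analogous to (but more global than) the local criteria in \cref{thm: subtree condition}. Once that claim is established, $\{\DD : w^T c_\DD \text{ maximal}\} = \{\GG, \GG_{\downarrow i}\}$, the face is the segment $\conv(c_\GG, c_{\GG_{\downarrow i}})$, and since these two vertices are distinct (as $\GG \not\equiv \GG_{\downarrow i}$) it is an edge of $\CIM_G$.
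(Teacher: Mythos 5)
First, note that the paper does not prove this statement: it is posed as \cref{conj: sink conj} and explicitly left open, so there is no proof of record to compare yours against. Your proposal is honest about not being a complete proof either --- you flag the ``all-or-nothing'' extremality claim as the main obstacle and leave it unestablished --- but the gap is actually earlier and more basic than the step you single out.

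The setup itself breaks. You assert that $c_\GG$ and $c_{\GG_{\downarrow i}}$ can differ only on $3$-sets $\{i,j,k\}$ with $j,k\in\cl_\GG(i)$, and your weight vector tacitly assumes $c_\GG(S)=1$ implies $c_{\GG_{\downarrow i}}(S)=1$ (so that the rules $w(S)=M$ when $c_\GG(S)=1$ and $w(S)=-M$ when $c_{\GG_{\downarrow i}}(S)=0$ never collide). Both claims are false. Take $i\to t\leftarrow j$ in $\GG$ with $t\in\ne_\GG(i)$ but $j\notin\ne_\GG(i)$: then $c_\GG(\{i,j,t\})=1$ (v-structure at $t$), while in $\GG_{\downarrow i}$ the edge becomes $t\to i$, the v-structure at $t$ disappears, and $c_{\GG_{\downarrow i}}(\{i,j,t\})=0$. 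So this coordinate is assigned both $+M$ and $-M$, and the forcing step (``any maximiser satisfies $c_\DD(S)=1$ whenever $c_\GG(S)=1$ and $c_\DD(S)=0$ whenever $c_{\GG_{\downarrow i}}(S)=0$'') demands contradictory values; in particular neither $c_\GG$ nor $c_{\GG_{\downarrow i}}$ would itself satisfy the forced conditions. This is exactly what distinguishes the conjecture from \cref{prop: realizing a set}: there one has $\GG\subseteq\HH$ and hence $c_\GG\le c_\HH$ coordinatewise, so the sandwich $c_\GG\le c_\DD\le c_\HH$ is meaningful, whereas here the two imsets are genuinely incomparable (sinking $i$ both creates v-structures at $i$ and destroys v-structures at the neighbours of $i$). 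Any certificate $w$ must balance these two opposing effects, which is also why your ``monotone fan'' heuristic in the combinatorial step does not go through: reversing a single edge $i\to t$ to $t\to i$ changes the coordinates $\{i,j,t\}$ in both directions depending on whether $j$ is adjacent to $i$. A correct proof would need a weight vector (or a direct edge criterion in the spirit of \cref{thm: subtree condition}) that quantifies this trade-off, and that is precisely what remains open.
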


Let $\GG$ be a DAG with skeleton $G$. 
Define an order on $[n]$ as $v_1,\dots,v_n$ where we have $v_i\to v_j\in\GG$ implies $i<j$.
That is, take a topological order of $\GG$. 
Then for any DAG $\HH$ with skeleton $G$ we have $\GG=(\dots((\HH_{\downarrow v_2})_{\downarrow v_3})\dots)_{\downarrow v_n}$. 
Thus if \cref{conj: sink conj} holds, then $\diam\CIM_G\leq n-1$ for all graphs $G$.

\section{Discussion}
\label{sec: discussion}
In this paper we have shown that we have, at worst, quadratic bounds for $\diam\CIM_G$ and linear bounds for $\diam\CIM_n$. 
As the dimension of $\CIM_n$ is $2^n-n-1$ we get that the diameter of $\CIM_n$ grows at most quadratically in the logarithm of the dimension, significantly slower than any general bounds (for example \cite{Naddef89}).
In this sense we observe that $G(\CIM_n)$ is highly connected. 

For an edge-walk on $\CIM_G$ maximising an objective $W$, it is  not always the case that the optimal path monotone in $W$.
As seen in \cite{C02, LRS22,LRS20,TBA06}, not having access to all edges can still give us consistency guarantees with additional assumptions on $W$. 
However, when dealing with a score function based on data, as often is the case with $\CIM_n$, these additional assumptions are not guaranteed to hold for any finite sample size and, especially for smaller sample sizes, more edges can still improve performance \cite{LRS22}. 
As we can expect there to be many edges in $\CIM_n$ and $\CIM_G$, the fundamental question becomes which edges are the most crucial for performance and which ones are easily checked. 
For example, while the edges of \cref{prop: realizing a set} give us a way to transverse the polytope in few steps, in practice it may be easier to work with only the edges of \cref{thm: edge pair} as a repeated use can indeed reach the same graph. 
However, the question which specific class of edges to use cannot be properly discussed without a better understanding of the edge structure of $\CIM_n$ in general. 

\subsection*{Acknowledgements}
The author was partially supported by the Wallenberg AI, Autonomous Systems and Software Program (WASP) funded by the Knut and Alice Wallenberg Foundation.

\bibliographystyle{plain}
\bibliography{references}

\end{document}